\theoremstyle{plain}
\newtheorem{theo}{Theorem}[section]
\newtheorem{prop}[theo]{Proposition}
\newtheorem{lemm}[theo]{Lemma}
\newtheorem{coro}[theo]{Corollary}
\theoremstyle{definition}
\newtheorem{exam}[theo]{Example}
\theoremstyle{remark}
\newtheorem*{rema}{Remark}
\numberwithin{equation}{section}
\def\k{\mathbf{k}}
\DeclareMathOperator{\Tor}{Tor}
\def\le{\leqslant}
\def\ge{\geqslant}
\newcommand{\zp}{\mathcal Z_P}
\DeclareMathOperator{\vc}{\mbox{\textit{vc}}}
\begin{document}

\title[Stanley--Reisner rings and moment-angle manifolds]{Stanley--Reisner rings of generalised truncation polytopes and their moment-angle manifolds}

\author{Ivan Limonchenko}

\thanks{The work was supported by
grant MD-111.2013.1 from the President of Russia and the Russian Foundation for Basic Research grants nn. 14-01-31398 and 14-01-00537-a.\\
}

\address{Department of Geometry and Topology,
Faculty of Mathematics and Mechanics, Moscow State University,
Leninskiye Gory, Moscow 119992, Russia}
\email{iylim@mail.ru}


\begin{abstract}
We consider simple polytopes $P=\vc^{k}(\Delta^{n_{1}}\times\ldots\times\Delta^{n_{r}})$, for $n_1\ge\ldots\ge n_r\ge 1,r\ge 1,k\ge 0$, that is, $k$-vertex cuts of a product of simplices, and call them {\textit{generalized truncation polytopes}}. For these polytopes we describe the cohomology ring of the corresponding moment-angle manifold $\zp$ and explore some topological consequences of this calculation. We also examine minimal non-Golodness for their Stanley--Reisner rings and relate it to the property of $\zp$ being a connected sum of sphere products.     
\end{abstract}

\dedicatory{Dedicated to Professor Victor M. Buchstaber on the occasion of his 70th birthday}

\maketitle

\section{Introduction}
We denote by $\k$ the base field of zero characteristic or the ring of integers. Let $\k[v_{1},\ldots,v_{m}]$ be the graded polynomial algebra on $m$ variables, $\deg(v_{i})=2$, and let $\Lambda[u_{1},\ldots,u_{m}]$ be the exterior algebra,
$\deg(u_{i})=1$. The \emph{face ring} (also known as the
\emph{Stanley--Reisner ring}) of a simplicial complex $K$
on a vertex set $[m]=\{1,\ldots,m\}$ is the quotient ring
$$
   \k[K]=\k[v_{1},\ldots,v_{m}]/\mathcal I_K
$$
where $\mathcal I_K$ is the ideal generated by those square free
monomials $v_{i_{1}}\cdots{v_{i_{k}}}$ for which
$\{i_{1},\ldots,i_{k}\}$ is not a simplex in $K$. We refer to
$\mathcal I_{K}$ as the \emph{Stanley--Reisner ideal} of~$K$.

Note that $\k[K]$ is a module over $\k[v_{1},\ldots,{v_{m}}]$ via
the quotient projection. The dimensions of the bigraded components
of the $\Tor$-groups,
$$
  \beta^{-i,2j}(\k[K]):=\dim_{\k}\Tor^{-i,
  2j}_{\k[v_{1},\ldots,v_{m}]}\bigl(\k[K],\k\bigr),\quad
   0\le{i,j}\le{m},
$$
are known as the \emph{bigraded Betti numbers} of~$\k[K]$,
see~\cite{S} and \cite[\S3.3]{B-P}. They are important invariants
of the combinatorial structure of $K$. 

A face ring $\k[K]$ is called {\textit{Golod}}
if the multiplication and all higher Massey operations
in $\Tor_{\k[v_{1},\ldots,{v_{m}}]}\bigl(\k[K],\k\bigr)$
are trivial. If so, the complex $K$ is also called {\textit{Golod}}.
If $K$ itself is not Golod but deleting any vertex $v$ from $K$
turns the restricted complex $K-\,v$ into a Golod one, then $\k[K]$ and $K$ itself are called {\textit{minimally non-Golod}}.  

The {\textit{moment-angle complex}} of a simplicial complex $K$ is a CW-complex $\mathcal Z_K=\bigcup\limits_{I\in K}\Bigl(\prod\limits_{i\in I}D^2\times\prod\limits_{i\notin I}S^1\Bigr)$ (viewed as a subcomplex in a unit disk $(D^{2})^{m}$). The $\Tor$-groups and the bigraded Betti numbers of $\k[K]$ acquire a topological interpretation by means of the following result on the cohomology of $\mathcal Z_K$:

\begin{theo}[{\cite[Theorem 8.6]{B-P} or \cite[Theorem 4.7]{P}}]\label{zkcoh}
The cohomology algebra of the moment-angle manifold $\mathcal Z_K$ is given
by the isomorphisms
\[
\begin{aligned}
  H^*(\mathcal Z_K;\k)&\cong\Tor_{\k[v_1,\ldots,v_m]}(\k[K],\k)\\
  &\cong H\bigl[\Lambda[u_1,\ldots,u_m]\otimes \k[K],d\bigr]\\
  &\cong \bigoplus\limits_{I\subset [m]}\widetilde{H}^*(K_{I}),
\end{aligned}
\]
where bigrading and differential in the cohomology of the differential
bigraded algebra are defined by
\[
  \mathop{\mathrm{bideg}} u_i=(-1,2),\;\mathop{\mathrm{bideg}} v_i=(0,2);\quad
  du_i=v_i,\;dv_i=0.
\]
In the third row, $\widetilde{H}^*(K_{I})$ denotes the reduced simplicial cohomology of the {\textit{full subcomplex}} $K_{I}$ of $K$ (the restriction of $K$ to $I\subset [m]$). The last isomorphism is the sum of isomorphisms 
$$H^p(\mathcal Z_K)\cong\sum\limits_{I\subset [m]}\widetilde{H}^{p-|I|-1}(K_{I}),$$
and the ring structure (the Hochster ring) is given by the maps
$$
H^{p-|I|-1}(K_{I})\otimes H^{q-|J|-1}(K_{J})\to H^{p+q-|I|-|J|-1}(K_{I\cup J}),\eqno (*)
$$
which are induced by the canonical simplicial maps $K_{I\cup J}\hookrightarrow K_{I}*K_{J}$ (join of simplicial complexes) for $I\cap J=\varnothing$ and {\bf{zero}} otherwise.
\end{theo}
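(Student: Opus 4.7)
The plan is to establish the three isomorphisms in turn and then verify the ring structure. Throughout, write $S = \k[v_1,\ldots,v_m]$ and regard $\k[K]$ as an $S$-module via the quotient map.

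For the first isomorphism I would realise $\zk$ as the homotopy fibre of the inclusion of the Davis--Januszkiewicz space $DJ(K) \hookrightarrow BT^m$. Since $H^*(DJ(K);\k) \cong \k[K]$ and $H^*(BT^m;\k) \cong S$, and $BT^m$ is simply connected, the Eilenberg--Moore spectral sequence for this fibration has $E_2$-term $\Tor_S(\k[K],\k)$ and converges to $H^*(\zk;\k)$. Collapse at $E_2$ and the triviality of multiplicative extensions follow from the formality of the map $DJ(K) \to BT^m$, giving the first isomorphism as algebras.

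For the second isomorphism, the standard Koszul resolution of $\k$ over $S$ is the free $S$-module $\Lambda[u_1,\ldots,u_m] \otimes S$ with $du_i = v_i$ and $dv_i = 0$. Tensoring it with $\k[K]$ over $S$ produces exactly the differential bigraded algebra $[\Lambda[u_1,\ldots,u_m] \otimes \k[K], d]$ of the statement, whose cohomology therefore computes $\Tor_S(\k[K],\k)$.

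For the third isomorphism and the ring structure I would refine the bigrading to a full $\Z^m$-multigrading by declaring $\deg u_i = \deg v_i = e_i$. The complex decomposes as a direct sum over multidegrees $\alpha \in \Z^m$: the summand attached to a squarefree multidegree supported on $I \subset [m]$ is identified, up to a shift, with the reduced simplicial cochain complex $\widetilde C^*(K_I)$, while non-squarefree multidegrees contribute zero cohomology by an acyclicity argument using the Koszul differential along the repeated variables. Summing over $I$ yields the Hochster decomposition. The hardest step is matching the Koszul product with the maps $(*)$: when $I \cap J \neq \varnothing$ the product vanishes because a shared generator forces $u_i^2 = 0$ in the exterior part; when $I \cap J = \varnothing$ the Koszul multiplication realises the shuffle product on cochains, which coincides with the simplicial join pairing $\widetilde C^*(K_I) \otimes \widetilde C^*(K_J) \to \widetilde C^*(K_I * K_J)$, and restriction along the canonical map $K_{I\cup J} \hookrightarrow K_I * K_J$ then produces $(*)$. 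The main obstacle is precisely this last chain-level identification, which demands careful sign bookkeeping and an explicit choice of bases to align the algebraic shuffle product with the topological join.
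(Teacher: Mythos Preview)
The paper does not supply its own proof of this theorem: it is quoted as a known result with a citation to \cite[Theorem~8.6]{B-P} and \cite[Theorem~4.7]{P}, and is used throughout the paper as a tool. There is therefore no in-paper argument to compare against.

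That said, your sketch is the standard route taken in the cited sources. The first isomorphism via the Eilenberg--Moore spectral sequence of the fibration $\zk \to DJ(K) \to BT^m$, the second via the Koszul resolution of $\k$ over $S$, and the third via the $\Z^m$-multigrading together with the acyclicity of non-squarefree pieces, are exactly the ingredients in Buchstaber--Panov. Your identification of the product with the join map (vanishing for $I\cap J\neq\varnothing$ because of $u_i^2=0$, and given by the shuffle/join pairing otherwise) is likewise the standard argument; the sign bookkeeping you flag is real but routine once bases $u_{i_1}\cdots u_{i_p}v_\sigma$ are fixed. So your proposal is correct and matches the approach of the references the paper invokes.
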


Let $P$ be a simple $n$-dimensional convex polytope with $m$ {\textit{facets}} (faces of codimension 1) $F_{1},\ldots,F_{m}$. Denote by $K_P$ the boundary $\partial P^*$ of the dual simplicial polytope. It can be viewed as a $(n-1)$-dimensional simplicial complex on the set $[m]$, whose simplices are subsets
$\{i_1,\ldots,i_k\}$ such that $F_{i_1}\cap\ldots\cap
F_{i_k}\ne\varnothing$ in~$P$.

We call $\zp=\mathcal Z_{K_P}$ the {\textit{moment-angle manifold}} of $P$. By \cite[Lemma 7.2]{B-P}, $\zp$ is a smooth closed manifold of
dimension $m+n$. 

Therefore, by Theorem~\ref{zkcoh} cohomology of $\zp$ acquires a bigrading and the topological Betti numbers $b^{q}(\zp)=\dim_{\k}H^{q}(\zp;\k)$
satisfy
$$
  b^{q}(\zp)=\sum\limits_{-i+2j=q}\beta^{-i,2j}(\k[K_P]).\eqno (1.3)
$$
We denote
\[
  \beta^{-i,2j}(P):=\beta^{-i,2j}(\k[K_{P}]).
\]

Poincar\'e duality in cohomology of $\zp$ respects the bigrading:
\begin{theo}[{\cite[Theorem 8.18]{B-P}}]\label{Poinc}
The following formula holds:
$$
   \beta^{-i,2j}(P)=\beta^{-(m-n)+i,2(m-j)}(P).
$$
\end{theo}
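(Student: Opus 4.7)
The plan is to upgrade the ordinary Poincar\'e duality on the closed orientable $(m+n)$-manifold $\mathcal Z_P$ to the bigrading on $H^*(\mathcal Z_P)$ supplied by Theorem~\ref{zkcoh}. The argument has three short steps: locate the top cohomology in a single bidegree, observe that cup product is bihomogeneous, and read off the desired symmetry from nondegeneracy of the usual Poincar\'e pairing.

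\emph{Step 1 (fundamental class).} The Hochster decomposition in Theorem~\ref{zkcoh} places the summand $\widetilde H^{p-|I|-1}(K_{P,I})$ of $H^p(\mathcal Z_P)$ in bidegree $(-i,2j)$ with $j=|I|$ and $-i+2j=p$. For $p=m+n$ and $|I|<m$ the cohomological degree $p-|I|-1\ge n$ exceeds the dimension of the $(n-1)$-sphere $K_P=\partial P^*$, so these summands vanish. Only $I=[m]$ survives, giving a one-dimensional $H^{m+n}(\mathcal Z_P)\cong\widetilde H^{n-1}(K_P)$ concentrated in bidegree $(-(m-n),2m)$.

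\emph{Step 2 (cup product is bigraded).} On the Koszul model $\Lambda[u_1,\ldots,u_m]\otimes\k[K_P]$ the bidegrees $(-1,2)$ of $u_i$ and $(0,2)$ of $v_i$ are additive, so multiplication sends $H^{-i,2j}\otimes H^{-i',2j'}$ into $H^{-(i+i'),2(j+j')}$.

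\emph{Step 3 (refined pairing).} Fix a nonzero $\alpha\in H^{-i,2j}(\mathcal Z_P)$. If $\beta$ lies in bidegree $(-i',2j')$ and $\alpha\cup\beta\ne 0$, then Step~1 forces $(i+i',j+j')=(m-n,m)$, i.e.\ $(-i',2j')=(-(m-n)+i,2(m-j))$. On the other hand, classical Poincar\'e duality supplies some $\beta\in H^{m+n-(-i+2j)}(\mathcal Z_P)$ with $\alpha\cup\beta\ne 0$; expanding $\beta$ into its bihomogeneous components, only the piece of bidegree $(-(m-n)+i,2(m-j))$ can contribute, so $\alpha$ pairs nontrivially against that piece. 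Hence the restricted pairing
\[
H^{-i,2j}(\mathcal Z_P)\otimes H^{-(m-n)+i,\,2(m-j)}(\mathcal Z_P)\longrightarrow H^{m+n}(\mathcal Z_P)\cong\k
\]
is nondegenerate on finite-dimensional $\k$-vector spaces, which forces the equality of dimensions in the statement.

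The main obstacle is Step~1: pinning down the precise bidegree in which the fundamental class $[\mathcal Z_P]^*$ lives. Once that is established — essentially because $K_P=\partial P^*$ is a simplicial $(n-1)$-sphere so that only the $I=[m]$ summand contributes in top degree — the rest is formal bookkeeping grafted onto the usual Poincar\'e pairing.
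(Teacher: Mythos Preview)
Your argument is correct, and it is essentially the standard proof: locate the top class in bidegree $(-(m-n),2m)$ via the Hochster decomposition, note that the Koszul product is bihomogeneous, and conclude by the nondegeneracy of the Poincar\'e pairing. One small remark on Step~1: you implicitly use orientability of $\mathcal Z_P$, but in fact your computation $H^{m+n}(\mathcal Z_P)\cong\widetilde H^{n-1}(K_P)\cong\k$ already supplies it (over $\mathbb Z$, the simplicial sphere $K_P$ gives $\widetilde H^{n-1}(K_P;\mathbb Z)\cong\mathbb Z$), so there is no circularity.

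As for comparison with the paper: there is nothing to compare. Theorem~\ref{Poinc} is not proved in this paper at all---it is quoted verbatim from Buchstaber--Panov \cite[Theorem~8.18]{B-P} and used as a black box (e.g.\ in deducing part~(c) of Theorem~\ref{maintr3}). The proof you wrote is, up to phrasing, the argument one finds in that reference.
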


From now on we shall drop the coefficient field $\k$ from the
notation of (co)homology groups. The following classical
result can be also obtained as a corollary of Theorem~\ref{zkcoh}:
\begin{theo}[Hochster, see~{\cite[Cor. 8.8]{B-P}}]\label{hoh}
Let $K=K_P$. We have:
$$
   \beta^{-i,2j}(P)=\sum\limits_{J\subset{[m]},|J|=j}\dim
   \widetilde{H}^{j-i-1}(K_{J}).
$$
\end{theo}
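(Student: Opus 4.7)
The plan is to extract the formula directly from Theorem~\ref{zkcoh} by carefully tracking the bigrading through the chain of isomorphisms stated there.

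First I would stress that every isomorphism of Theorem~\ref{zkcoh} is in fact bigraded, where the bigrading on $\Tor_{\k[v_1,\ldots,v_m]}(\k[K],\k)$ comes from the Koszul DGA $R = \Lambda[u_1,\ldots,u_m] \otimes \k[K]$ with $\mathop{\mathrm{bideg}} u_i = (-1,2)$ and $\mathop{\mathrm{bideg}} v_i = (0,2)$. By definition, $\beta^{-i,2j}(\k[K])$ is the dimension of the bidegree-$(-i,2j)$ component of this bigraded module.

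Next I would analyse how the Baskakov--Buchstaber--Panov decomposition $\bigoplus_{J \subset [m]} \widetilde{H}^*(K_J)$ of the third line of Theorem~\ref{zkcoh} interacts with the bigrading. Concretely, the $J$-summand is computed by the subcomplex of $R$ spanned by the monomials $u_A \, v_\sigma$ with $A \sqcup \sigma = J$ and $\sigma$ a face of $K_J$. Every such monomial involves exactly $|J|$ generators, each of second grading $2$, so the whole summand sits in second grading $2|J|$. This identifies the second index of the bigrading with $2|J|$, forcing the sum in Hochster's formula to range over subsets $J$ of size exactly $j$.

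Then I would match the first index with the simplicial cohomological degree: a class in $\widetilde{H}^k(K_J)$ corresponds, by the last isomorphism of Theorem~\ref{zkcoh}, to a class of total degree $p = k + |J| + 1$ in $H^*(\mathcal{Z}_K)$. Setting $|J| = j$ and $-i + 2j = p = k + j + 1$ gives $k = j - i - 1$, and summing dimensions over all $J \subset [m]$ with $|J| = j$ yields the asserted formula $\beta^{-i,2j}(P) = \sum_{|J|=j} \dim \widetilde{H}^{j-i-1}(K_J)$.

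The main obstacle is the bookkeeping in the second step: one has to look inside the proof of Theorem~\ref{zkcoh} rather than merely invoke its statement, in order to pin down which subcomplex of $R$ yields the $J$-summand and hence to determine its bidegree. Once this identification is in place, the remainder of the argument is purely formal.
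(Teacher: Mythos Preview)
Your proposal is correct and follows exactly the route the paper indicates: the paper does not spell out a proof but simply records the statement as a corollary of Theorem~\ref{zkcoh}, and your argument is precisely the bigrading bookkeeping needed to read off Hochster's formula from that theorem. The only remark is that the paper relies on the cited source \cite[Cor.~8.8]{B-P} for the details you have supplied.
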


The structure of this paper is as follows. Complete calculation of bigraded Betti numbers for generalized truncation polytopes is given in
Section~2. We then apply our calculation to describing the topology of the corresponding moment-angle manifold. For truncation polytopes $P=vc^{k}(\Delta^{n})$ with $n\ge 2,k\ge 0$ this was done in~\cite{Li}. Our results on the bigraded Betti numbers agree with partial description of the diffeomorphism types of $\zp$ for generalized truncation polytopes with $r=1,2$ from~\cite{G-LdM}.  In Section~3 we prove a criterion of when a face ring of an arbitrary generalized truncation polytope is minimally non-Golod, the latter property was proved for truncation polytopes in~\cite[Theorem 6.19]{B-J}. We also relate our results to a conjecture of~\cite{G-P-T-W}, cohomological rigidity for moment-angle manifolds and topological invariance problems for bigraded Betti numbers.

The author is deeply grateful to his scientific adviser Taras Panov for
many helpful discussions and advice which was always so kindly
proposed during this work and whenever. 
Many thanks to Hiraku Abe, Anton Ayzenberg and Suyoung Choi for their comments and suggestions and to Mikiya Masuda for organizing the Toric Topology meetings in Osaka where the author gave his first related talk. 

\section{Cohomology rings of moment-angle manifolds for generalized truncation polytopes}

Let $P$ be a simple $d$-dimensional polytope and $v\in{P}$ its vertex. Choose
a hyperplane $H$ such that $H$ separates $v$ from the other
vertices and $v$ belongs to the positive halfspace $H_\ge$
determined by~$H$. Then $P\cap H_\ge$ is a $d$-simplex, and
$P\cap H_\le$ is a simple polytope, which we refer to as a
\emph{vertex truncation (cut)} of~$P$. When the choice of the cut vertex is
clear or irrelevant we use the notation $\vc(P)$. We also use the
notation $\vc^k(P)$ for a polytope obtained from $P$ by iterating
the vertex cut operation $k$ times. The combinatorial type of $P$ may depend on the choice of truncated vertices but it is not important for us here.

As an example of this procedure, we consider the polytope
$P=\vc^{k}(\Delta^{n_{1}}\times\ldots\times\Delta^{n_{r}})$, for $n_1\ge\ldots\ge n_r\ge 1,r\ge 1,k\ge 0$, where $\Delta^n$ is an $n$-simplex. This becomes a truncation polytope (dual to a stacked polytope, see~\cite{Li}) if and only if $r=1$ or $r=2,\,n_{2}=1$, here we refer to such $P$ as a \emph{generalized truncation polytope of type} $(k;n_1,\ldots,n_r)$ or  a $(k;n_1,\ldots,n_r)$-polytope; it has $m=n_{1}+\ldots+n_{r}+r+k$ facets and dimension $d=n_{1}+\ldots+n_{r}$. 

The bigraded Betti numbers for stacked polytopes were calculated
completely in~\cite{T-H}. We begin with a theorem that gives a complete calculation of bigraded Betti numbers for the case $r=2$: 

\begin{theo}\label{maintr3}
For a generalized truncation polytope $P=vc^{k}(\Delta^{n_{1}}\times \Delta^{n_{2}})$ with $n_{1}\ge{n_{2}}>1,k\ge 0$, the bigraded Betti numbers of $\zp$ are given by the following formulae $(1\le i\le k+1)$.

\begin{itemize}
\item[(a)]{$\beta^{-i,2(i+n)}(P)=2\binom{k}{i-1}$, if $n_{1}=n_{2}=n$}
\item[(b)]{$\beta^{-i,2(i+n_{1})}(P)=\beta^{-i,2(i+n_{2})}(P)=\binom{k}{i-1}$, if $n_1 > n_2$} 
\item[(c)]{$\beta^{-i,2(i+1)}(P)=\beta^{-(k+2-i),2(k+1-i+n_{1}+n_{2})}(P)=i\binom{k+2}{i+1}-\binom{k}{i-1}$} 
\item[(d)]{$\beta^{0,0}(P)=\beta^{-(m-d),2m}(P)=1$}
\end{itemize}
The other bigraded Betti numbers are \textbf{zero}, where $m$ is the number of facets of $P$ and $d$ is its dimension.
\end{theo}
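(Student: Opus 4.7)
The approach is to apply Hochster's formula (Theorem~\ref{hoh}) and enumerate full subcomplexes of $K_P$. First, I fix a combinatorial model for $K_P$: starting from $K_0 = \partial\Delta^{n_1} * \partial\Delta^{n_2}$, with vertex set $A \sqcup B$ where $|A| = n_1+1$ and $|B| = n_2+1$, each vertex cut of $P$ realises as a stellar subdivision of $K_P$ at a facet $\sigma_\ell$, replacing $\sigma_\ell$ by the cone $c_\ell * \partial\sigma_\ell$ with a new apex $c_\ell$. Setting $C = \{c_1, \ldots, c_k\}$, the vertex set of $K_P$ is $A \sqcup B \sqcup C$ with simplex structure determined by this recursive description.

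Next I would determine the homotopy type of $K_J$ for every $J \subseteq A \sqcup B \sqcup C$, splitting according to $J_A = J \cap A$, $J_B = J \cap B$, $J_C = J \cap C$. The driving observations are: (i) if $J_C = \varnothing$, then $K_J$ is a full subcomplex of $K_0$ with some of the $k$ subdivided facets deleted, and the join structure of $K_0$ forces $K_J$ to be either a simplex or contractible unless $J_A = A$ or $J_B = B$, with the extreme case $J = A \cup B$ giving a sphere with several top cells removed, namely $\bigvee^{k-1} S^{n_1+n_2-2}$; (ii) whenever $c_\ell \in J_C$, the star of $c_\ell$ in $K_J$ is a simplicial cone, so the decomposition $K_J = K_{J \setminus \{c_\ell\}} \cup \mathrm{star}(c_\ell)$, with intersection $\partial\sigma_\ell \cap K_{J \setminus \{c_\ell\}}$, yields a Mayer--Vietoris sequence enabling induction on $|J_C|$ or equivalently on $k$.

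With this classification, the four cases decouple as follows. Subsets of the form $A \cup C'$ with $|C'| = i-1$ contribute a copy of $\widetilde H^{n_1-1}\cong\k$ into bidegree $(-i,2(i+n_1))$ each, giving the $\binom{k}{i-1}$ factor of cases~(a), (b); symmetrically for $B$ and $n_2$, and when $n_1=n_2$ the two families land in the same bidegree, producing the factor of~$2$. Case~(d) is direct: $J=\varnothing$ gives $\beta^{0,0}=1$ and $J=[m]$ gives the top class via $K_P\simeq S^{d-1}$. Case~(c) records $\widetilde H^0$ of full subcomplexes of size $i+1$, i.e.\ counts disconnected $K_J$ weighted by excess components; the derivation of $i\binom{k+2}{i+1}-\binom{k}{i-1}$ should enumerate, via a double count over the $k+2$ natural vertex classes arising from the $r=2$ factors together with the $k$ cut vertices, the pairs (separator, disconnected $J$), then subtract a correction for purely-$C$ configurations that have been overcounted. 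The Poincaré dual entries of (c) follow from Theorem~\ref{Poinc}.

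The main obstacle will be case~(c), where the combinatorial bookkeeping must be shown to be independent of the order in which vertex cuts are performed. I plan to handle this by induction on $k$, using the Mayer--Vietoris observation above to track the increment in $\beta^{-i,2j}$ caused by each new stellar subdivision and matching it against the discrete difference $i\binom{k+2}{i+1}-(i-1)\binom{k+1}{i}$ of the claimed formula. Poincaré duality both halves the computational burden and provides an internal consistency check between the two sides of case~(c).
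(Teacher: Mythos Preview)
Your approach is essentially the paper's: Hochster's formula, followed by an inductive classification of the reduced homology of full subcomplexes $K_W$ via Mayer--Vietoris as one adds stellar-subdivision apices, which is exactly the content of the paper's Lemma~\ref{zero} (its five-case analysis is your observation~(ii) unpacked), and your identification of the contributing subsets $A\cup C'$, $B\cup C'$ is precisely Lemma~\ref{zero}(ii).

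For~(c) the paper is more efficient than your outline: rather than a direct double count, it isolates a clean one-step recurrence (Lemma~\ref{betapp'})
\[
\beta^{-i,2(i+1)}\bigl(\vc(P)\bigr)=\binom{m-d}{i}+\beta^{-(i-1),2i}(P)+\beta^{-i,2(i+1)}(P),
\]
valid for \emph{any} simple $P$, and then checks by induction that $i\binom{k+2}{i+1}-\binom{k}{i-1}$ satisfies it with the correct base case $\beta^{-i,2(i+1)}(\Delta^{n_1}\times\Delta^{n_2})=0$. This recurrence automatically handles the independence-of-cut-order issue you flag, and you should aim for it instead of the somewhat loose ``discrete difference'' you wrote (which, as stated, does not match the actual increment). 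Your claim that $K_{A\cup B}\simeq\bigvee^{k-1}S^{n_1+n_2-2}$ is only correct when all $k$ cuts are at vertices of the original product; for nested cuts fewer top cells are removed, so be careful there---the paper's Case~4 handles this uniformly.
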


\begin{proof}
To proceed by induction on the number of vertex truncations $k$ we should analyze the behavior of bigraded Betti numbers under
a single vertex cut. Let $P$ be an arbitrary simple polytope and
$P'=\vc(P)$. We denote by $Q$ and $Q'$ the dual simplicial
polytopes respectively, and denote by $K$ and $K'$ their boundary
simplicial complexes. Then $Q'$ is obtained by adding a pyramid
with a vertex $v$ over a facet $F$ of~$Q$. We also denote by $V$,
$V'$ and $V(F)$ the vertex sets of $Q$, $Q'$ and $F$ respectively,
so that $V'=V\cup{v}$.

The proof of (c) is based on the following lemma:
\begin{lemm}[{\cite{T-H},\cite{Li}}]\label{betapp'}
Let $P$ be a simple $d$-polytope with $m$ facets and $P'=\vc(P)$.
Then
$$
 \beta^{-i,2(i+1)}(P')=\binom{m-d}{i}+\beta^{-(i-1),2i}(P)+\beta^{-i,2(i+1)}(P).
$$
\end{lemm}
\begin{proof}
This is done by a direct application of Theorem~\ref{hoh} for $j=i+1$.
\end{proof}

Now formula (c) of Theorem~\ref{maintr3} follows by induction
on the number of cut vertices, using the fact that
$\beta^{-i,2(i+1)}(\Delta^{n_{1}}\times\Delta^{n_{2}})=0$, $n_{1}\ge n_{2}>1$ for all~$i$, Lemma~\ref{betapp'} and the bigraded Poincare duality, see Theorem~\ref{Poinc}. Formula (d) is valid for all simple polytopes.\\

In (b) the first equality also follows from the bigraded Poincare duality, see Theorem~\ref{Poinc}. It remains to prove (a) and the second equality of (b). We use the following lemma:\\


\begin{lemm}\label{zero}
Let $P$ be a generalized truncation polytope of type $(k;n_1,n_2)$, $K$ the boundary complex of the dual simplicial polytope, $V$ the vertex set of~$K$, and $W$ a nonempty proper subset of~$V$. Then
\begin{itemize}
\item[(i)]{$\widetilde{H}_{i}(K_{W})=0$ for $i\neq{0,n_{1}-1,n_{2}-1,d-2};$}
\item[(ii)]{
For $i=n_{1}-1$ or $n_{2}-1$ the homology group $\widetilde{H}_{i}(K_{W})$ is nontrivial (and then $\cong{\k}$) if and only if $W=V(\Delta^{n_1})\cup{NV_1}$ or $V(\Delta^{n_2})\cup{NV_2}$, where $NV_{1,2}\subset{NV}$, $NV$ is a set of 'new vertices' of $P$, $|NV|=k$.
}
\end{itemize}
\end{lemm}
\begin{proof}

The proof is by induction on the number $k$ of vertex truncations. We begin with the proof of (i).\\

If $k=0$ then $P=\Delta^{n_1}\times\Delta^{n_2}$, and $K_P=(\partial\Delta^{n_{1}})*(\partial\Delta^{n_2})\cong S^{n_{1}-1}*S^{n_{2}-1}\cong S^{n_{1}+n_{2}-1}$. Therefore, by the definition of join, $K_{W}$ is either contractible or homotopy equivalent to a sphere of dimension either $n_{1}-1$ or $n_{2}-1$ for every proper subset $W\subset{V}$.\\

To make the induction step we consider $V'=V\cup{v}$, $V_{1}=V(\Delta^{n_1})$, $V_{2}=V(\Delta^{n_2})$ and $V(F)$ as
in the beginning of the proof of Theorem~\ref{maintr3}. Assume the
statement is proved for $V$ and let $W$ be a proper subset
of~$V'$.\\

We consider 5 cases, following~\cite{Li}.\\


\noindent\emph{Case 1:} $v\in{W},\; W\cap{V(F)}\neq{\varnothing}.$\\

If $V(F)\subset{W}$, then $K'_{W}$ is a subdivision of
$K_{W-\{v\}}$. It follows that
$\widetilde{H}_{i}(K'_{W})\cong{\widetilde{H}_{i}(K_{W-\{v\}})}$.

If $W\cap{V(F)}\neq{V(F)}$, then we have
$$
   K'_{W}=K_{W-\{v\}}\cup{K'_{W\cap{V(F)\cup{\{v\}}}}},\quad
   K_{W-\{v\}}\cap{K'_{W\cap{V(F)\cup{\{v\}}}}}=K_{W\cap{V(F)}},
$$
and both $K_{W\cap{V(F)}}$ and $K'_{W\cap{V(F)\cup{\{v\}}}}$ are
contractible. From the Mayer--Vietoris exact sequence we again
obtain
$\widetilde{H}_{i}(K'_{W})\cong{\widetilde{H}_{i}(K_{W-\{v\}})}$.\\


\noindent\emph{Case 2:} $v\in{W},\; W\cap{V(F)}=\varnothing.$\\

In this case it is easy to see that
$K'_{W}=K_{W-\{v\}}\sqcup{\{v\}}.$ It follows that
$$
   \widetilde{H}_{i}(K'_{W})\cong\begin{cases}
   \widetilde{H}_{i}(K_{W-\{v\}})\oplus{\k},&\text{for $i=0;$}\\
   \widetilde{H}_{i}(K_{W-\{v\}}),&\text{for $i>0.$}
                                     \end{cases}
$$


\noindent\emph{Case 3:}  $W=V'-\{v\}=V.$\\

Then $K'_{W}$ is a triangulated $(d-1)$-disk and therefore
contractible.\\


\noindent\emph{Case 4:} $v\not\in{W},\; V(F)\subset{W},\;W\neq{V}.$\\

We have
$$
   K_{W}=K'_{W}\cup{F},\quad
   K'_{W}\cap{F}=\partial{F},
$$
where $\partial{F}$ is the boundary of the facet $F$. Since
$\partial{F}$ is a triangulated ${(d-2)}$-sphere and~$F$ is a
triangulated $(d-1)$-disk, the Mayer--Vietoris homology sequence
implies that
$$
   \widetilde{H}_{i}(K'_{W})\cong\begin{cases}
   \widetilde{H}_{i}(K_{W}),&\text{for $i<d-2;$}\\
   \widetilde{H}_{i}(K_{W})\oplus{\k},&\text{for $i=d-2.$}
                                     \end{cases}
$$\\


\noindent\emph{Case 5:} $v\not\in{W},\; V(F)\not\subset{W}.$\\

In this case we have $K'_{W}\cong{K_{W}}.$

In all cases we obtain
$$
   \widetilde{H}_{i}(K'_{W})\cong\widetilde{H}_{i}(K_{W-\{v\}})=0\quad
   \text{for }i\neq{0,n_{1}-1,n_{2}-1,d-2},
$$
which finishes the proof of (i) by induction.\\

To prove (ii) note that from the above proof and the definition of join it follows that $\widetilde{H}_{n_{1}-1}(K_{W})$ is nontrivial if and only if $V(W)=V_{1}\cup{NV_1}$ for some $NV_1\subset{NV},|NV_1|\ge 0$, that is $V_{1}\subset{W}$ and $V_{2}\cap{W}=\varnothing$. In the latter case $\widetilde{H}_{n_{1}-1}(K_{W})\cong{\k}$. The case of $(n_{2}-1)$-dimensional homology groups is similar.
\end{proof}

Now statements (a) and (b) of Theorem~\ref{maintr3} follow from Theorem~\ref{hoh} (if $K_J$ has $j=i+n_{1}$ vertices then $|V_1|=n_{1}+1$ implies that any $j-|V_1|=i-1$ {\bf{new}} vertices uniquely determine such $K_J$).\\

Vanishing of the other bigraded Betti numbers in Theorem~\ref{maintr3} follows from Theorem~\ref{hoh} and Lemma~\ref{zero}.
\end{proof}

Our main result in this section gives a complete calculation of the bigraded Betti numbers of all generalized truncation polytopes:

\begin{theo}\label{maintr}
Let $P$ be a $(k;n_1,\ldots,n_r)$-polytope with $r\ge 1$. Denote by $a$ the number of '1's in the set $\{n_1,\ldots,n_r\}$. 
Then the bigraded Betti numbers of $P$ are given by the following formulae $(1\le i\le k+r-1, 1<l<d-1)$.
\begin{itemize}
\item[(a)]{$\beta^{-i,2(i+l)}(P)=
\sum\limits_{\{n_{i_1},\ldots,n_{i_s}\}\subset\{n_1,\ldots,n_r\}:\, l=n_{i_1}+\ldots+n_{i_s}}\binom{k}{i-s}.$}
\item[(b)]{
$\beta^{-i,2(i+1)}(P)=\beta^{-(k+r-i),2(d+k+r-i-1)}(P)=$\\
\hspace{2cm}
$=k\binom{k+r-1}{i}-\binom{k}{i+1}+a\binom{k}{i-1}.$%
\\
}
\item[(c)]{$\beta^{0,0}(P)=\beta^{-(m-d),2m}(P)=1.$}
\end{itemize}
The other bigraded Betti numbers are \textbf{zero}
(we assume $\binom{b}{c}=0$ if $b<c$ or one of them is negative).
\end{theo}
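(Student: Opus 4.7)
The plan is to extend the inductive framework of Theorem~\ref{maintr3} to arbitrary $r\ge 1$. First I would generalise Lemma~\ref{zero} to describe the nontrivial reduced homology of every full subcomplex $K_W$ of $K=K_P$, then derive formula (a) directly from Hochster (Theorem~\ref{hoh}) by an enumeration argument, derive (b) by induction on the number of truncations $k$ using Lemma~\ref{betapp'} together with bigraded Poincar\'e duality (Theorem~\ref{Poinc}), and obtain (c) and the vanishing of all remaining Betti numbers as immediate corollaries.

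Write the vertex set of $K_P$ as $V'=V_1\sqcup\cdots\sqcup V_r\sqcup NV$, where $V_j$ is the vertex set of the facet of the dual simplicial polytope corresponding to the factor $\Delta^{n_j}$ and $NV$ is the set of $k$ new vertices from the truncations. The generalised lemma I would prove asserts that for every nonempty proper $W\subset V'$, the group $\widetilde{H}_i(K_W)$ vanishes unless $i\in\{0,\,d-2\}$ or $i=n_{j_1}+\cdots+n_{j_s}-1$ for some $\{j_1,\ldots,j_s\}\subsetneq\{1,\ldots,r\}$; moreover, in the range $0<i<d-2$ the group $\widetilde{H}_i(K_W)\cong\k$ occurs precisely when $W=V_{j_1}\cup\cdots\cup V_{j_s}\cup N$ for some $N\subset NV$ with $i+1=n_{j_1}+\cdots+n_{j_s}$. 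The base case $k=0$ follows at once from the join decomposition $K_{P_0}=\partial\Delta^{n_1}*\cdots*\partial\Delta^{n_r}$, combined with the fact that a join containing any contractible factor is itself contractible. The inductive step proceeds through the same five local cases used in the proof of Lemma~\ref{zero}; since those arguments depend only on the star of the newly added vertex $v$ and on the inductive hypothesis for $K_{W-\{v\}}$, they extend verbatim from $r=2$ to arbitrary $r$.

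Granted this lemma, formula (a) is a direct count via Theorem~\ref{hoh}: for $|J|=i+l$ with $1<l<d-1$, a nontrivial contribution to $\widetilde{H}^{l-1}(K_J)$ arises precisely from $J=V_{j_1}\cup\cdots\cup V_{j_s}\cup N$ with $l=n_{j_1}+\cdots+n_{j_s}$ and $|N|=(i+l)-(l+s)=i-s$, which accounts for $\binom{k}{i-s}$ choices of $N$ and gives the stated sum. Formula (b) is then proved by induction on $k$: Lemma~\ref{betapp'} applied at stage $P_{k-1}$ (for which $m-d=k+r-1$) yields the recurrence
\[
\beta^{-i,2(i+1)}(P_k)=\binom{k+r-1}{i}+\beta^{-(i-1),2i}(P_{k-1})+\beta^{-i,2(i+1)}(P_{k-1}),
\]
whose base case is $\beta^{-i,2(i+1)}(\Delta^{n_1}\times\cdots\times\Delta^{n_r})=a\cdot\delta_{i,1}$, since at $k=0$ disconnectedness of $K_W$ can only arise from $W=V_j$ with $n_j=1$. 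A Vandermonde-type rearrangement of the three summands propagates the closed form, and the second equality in (b) is bigraded Poincar\'e duality. Formula (c) is valid for every simple polytope, and vanishing of the remaining bigraded Betti numbers is immediate from the generalised lemma via Theorem~\ref{hoh}.

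The main obstacle is the case analysis for the generalised lemma: although each of Cases~1--5 is local at the new vertex, one must verify that no unexpected homology classes appear at intermediate dimensions when $r>2$ and that the two boundary levels $l\in\{1,d-1\}$ are correctly isolated from formula (a) into formula (b). The inductive bookkeeping for (b) also requires care when several of the $n_j$ equal $1$, since the term $a\binom{k}{i-1}$ accumulates from distinct sources across the recurrence and must be tracked carefully against the alternating Pascal identity used in the Vandermonde step.
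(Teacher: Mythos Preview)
Your proposal is correct and follows the same strategy as the paper: extend Lemma~\ref{zero} to arbitrary $r$ via the identical five-case induction on $k$, read off (a) from Hochster's formula by counting the structured subsets $W=V_{j_1}\cup\cdots\cup V_{j_s}\cup N$, and derive (b) from Lemma~\ref{betapp'} by induction on $k$ together with bigraded Poincar\'e duality. The only cosmetic difference is that the paper attributes the summand $a\binom{k}{i-1}$ in (b) directly to the formula-(a) count at $s=1$, whereas you absorb it into the base case $\beta^{-i,2(i+1)}(P_0)=a\,\delta_{i,1}$ of the recurrence; the two presentations unwind to the same computation.
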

\begin{proof}
We use the notation of Theorem~\ref{maintr3}.

To prove (a) we modify the statement of Lemma~\ref{zero} for the general case as follows. From the proof of Lemma~\ref{zero} and Theorem~\ref{hoh} it is clear that $\beta^{-i,2j}(P)=0$ for all $j:\,j-i\neq{n_{i_1}+\ldots+n_{i_s}},\{n_{i_1},\ldots,n_{i_s}\}\subset\{n_1,\ldots,n_r\}$. For other $1<l=j-i<d-1$, the full subcomplexes $K_W$ with $W=V(\Delta^{n_p})\cup{NV_p}$, $NV_p\subset NV$, for some $1\le p\le r$, and {\bf{all their joins}} give all nontrivial homology groups $\widetilde{H}_{l-1}(K_{W})$.     
Here we used the fact that $S^{p}*S^{q}\cong S^{p+q+1}$. This implies that $j=i+l=(n_{1}+1)+\ldots+(n_{s}+1)+(i-s)$ and so any $(i-s)$ {\bf{new}} vertices give +1 in the sum (a) when $i$ and $l$ are fixed.\\

For the statement (b), the third term of the sum appears from the argument similar to the one for formula (a) (we have $a$ summands of the type $\binom{k}{i-s}$ with $s=1$); the first two terms appear from Lemma~\ref{betapp'} by induction on the number of vertex truncations $k$.\\

The rest of Theorem~\ref{maintr} follows from Theorem~\ref{hoh}.

\end{proof}

\begin{rema}
Note that formula from Theorem~\ref{maintr} (b) in the case $a=0$, $r=2$ gives $\beta^{-i,2(i+1)}(P)=k\binom{k+r-1}{i}-\binom{k}{i+1}=
i\binom{k+2}{i+1}-\binom{k}{i-1}$, in accordance with Theorem~\ref{maintr3} (c).
\end{rema}

We also include the results about the
bigraded Betti numbers of truncation polytopes ($n\ge 3$) and polygons ($n=2$) for further use:

\begin{prop}[{\cite{T-H},\cite{Li}}]\label{trunc}

Let $P=\vc^{k}(\Delta^{n})$ be a truncation polytope. Then for
$n\ge{3}$ the bigraded Betti numbers are given by the following
formulae:
\begin{itemize}
\item[(a)]{$\beta^{-i,2(i+1)}(P)=i\binom{k+1}{i+1},$}
\item[(b)]{$\beta^{-i,2(i+n-1)}(P)=(k+1-i)\binom{k+1}{k+2-i},$}
\item[(c)]{$\beta^{-i,2j}(P)=0,\quad\text{for } i+1< j< i+n-1.$}
\item[(d)]{$\beta^{0,0}(P)=\beta^{-(m-n),2m}(P)=1.$}
\end{itemize}
The other bigraded Betti numbers are zero.
\end{prop}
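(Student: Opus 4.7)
The plan is to induct on the number $k$ of vertex truncations, mirroring the strategy of Theorem~\ref{maintr3}. In the base case $k=0$ the polytope is $\Delta^n$, whose face ring has only $\beta^{0,0}=\beta^{-1,2(n+1)}=1$ nonzero; a direct check confirms that (a)--(d) all produce these values when $k=0$ (with the other formulas giving zero because $\binom{1}{i+1}$ and $(1-i)\binom{1}{2-i}$ vanish for the relevant indices).

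For (a), I would apply Lemma~\ref{betapp'} to $P=\vc(P'')$, where $P''=\vc^{k-1}(\Delta^n)$ has $m-d=k$:
$$
\beta^{-i,2(i+1)}(P)=\binom{k}{i}+\beta^{-(i-1),2i}(P'')+\beta^{-i,2(i+1)}(P'').
$$
The inductive hypothesis rewrites the right-hand side as $\binom{k}{i}+(i-1)\binom{k}{i}+i\binom{k}{i+1}=i\binom{k}{i}+i\binom{k}{i+1}$, which collapses to $i\binom{k+1}{i+1}$ by Pascal's rule.

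The main step is (c), which would follow from a truncation-polytope analogue of Lemma~\ref{zero}: for every nonempty proper $W\subset V$ one has $\widetilde{H}_i(K_W)=0$ unless $i\in\{0,n-2\}$. This is proved by induction on $k$: the five-case Mayer--Vietoris argument in the proof of Lemma~\ref{zero} uses only that $P$ is a simple $d$-polytope and $v$ a truncated vertex, so it applies verbatim, each case identifying $\widetilde{H}_i(K'_W)$ with the homology of a full subcomplex of the previous complex up to an extra copy of $\k$ concentrated in degree $0$ or $d-2=n-2$. The base case is immediate since every proper full subcomplex of $\partial\Delta^n$ is a simplex. Plugging this vanishing into Hochster's formula (Theorem~\ref{hoh}) forces $\beta^{-i,2j}(P)=0$ precisely when $1<j-i<n-1$.

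Formula (b) is the bigraded Poincar\'e dual of (a): with $m-n=k+1$, Theorem~\ref{Poinc} sends $(-i,2(i+1))$ to $(-(k+1-i),2((k+1-i)+(n-1)))$, and substituting into $i\binom{k+1}{i+1}$ yields $(k+1-i)\binom{k+1}{k+2-i}$. Formula (d) combines the trivial identity $\beta^{0,0}(P)=1$ with the same Poincar\'e duality (equivalently, with the lone $J=[m]$ term in Hochster's formula, giving $\widetilde{H}^{n-1}(S^{n-1})=\k$). The only genuine obstacle is the vanishing lemma for full subcomplexes, but it is considerably simpler than the two-factor version of Lemma~\ref{zero}, since a single ``simplex direction'' $\partial\Delta^n$ permits only two surviving homological dimensions rather than four.
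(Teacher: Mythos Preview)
The paper does not supply its own proof of this proposition; it is quoted from \cite{T-H} and \cite{Li} and included ``for further use.'' Your argument is correct and is exactly the $r=1$ specialisation of the method the paper uses to prove Theorem~\ref{maintr3} (and, in outline, Theorem~\ref{maintr}): Lemma~\ref{betapp'} plus induction gives~(a), the five-case Mayer--Vietoris analysis of Lemma~\ref{zero} (which, as you observe, is dimension-agnostic) gives the vanishing~(c), bigraded Poincar\'e duality converts~(a) into~(b), and~(d) is generic. Indeed, Theorem~\ref{maintr} with $r=1$, $a=0$ already recovers this proposition via the identity $k\binom{k}{i}-\binom{k}{i+1}=i\binom{k+1}{i+1}$, so your proof is in complete agreement with the paper's general approach.
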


\begin{prop}[{see~\cite[Example~8.21]{B-P}}]\label{polyg}

If $P=\vc^{k}(\Delta^{2})$ is an $(k+3)$-gon, then
\begin{itemize}
\item[(a)]{$\beta^{-i,2(i+1)}(P)=i\binom{k+1}{i+1}+(k+1-i)\binom{k+1}{k+2-i},$}
\item[(b)]{$\beta^{0,0}(P)=\beta^{-(k+1),2(k+3)}(P)=1.$}
\end{itemize}
The other bigraded Betti numbers are zero.
\end{prop}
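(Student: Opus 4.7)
The plan is to compute every bigraded Betti number directly from Hochster's formula (Theorem~\ref{hoh}). Since $P=\vc^{k}(\Delta^{2})$ is a $(k+3)$-gon, its dual simplicial polytope is again a polygon, so $K=K_{P}$ is the cycle $C_{m}$ on $m=k+3$ vertices. For any $J\subset[m]$, the full subcomplex $K_{J}$ is dictated by the cyclic ordering and falls into one of three types: (i) $J=\varnothing$, giving $\dim\widetilde H^{-1}(K_{J})=1$; (ii) $J=[m]$, in which case $K_{J}=C_{m}\simeq S^{1}$ and $\dim\widetilde H^{1}(K_{J})=1$; (iii) $\varnothing\neq J\subsetneq[m]$, in which case $K_{J}$ is a disjoint union of contractible paths, so $\widetilde H^{p}(K_{J})$ vanishes for $p>0$ and $\dim\widetilde H^{0}(K_{J})=c(J)-1$, where $c(J)$ denotes the number of maximal $J$-arcs of the cycle.

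Cases (i) and (ii) immediately supply the two entries in part~(b): $J=\varnothing$ (with $j=0$) gives $\beta^{0,0}(P)=1$, while $J=[m]$ (with $j=m$ and $j-i-1=1$, forcing $i=k+1$) gives $\beta^{-(k+1),2(k+3)}(P)=1$; the second of these also follows from the first via Theorem~\ref{Poinc}. Case~(iii) shows that every other potentially nonzero Betti number must lie on the diagonal $j=i+1$, matching the shape predicted in~(a), and that
\[
\beta^{-i,2(i+1)}(P)=\sum_{\substack{J\subset[m]\\ |J|=i+1}}\bigl(c(J)-1\bigr),\qquad 0\le i\le k+1.
\]

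The combinatorial core of the argument is to evaluate this sum. I would identify $c(J)$ with the number of positions $a\in J$ satisfying $a+1\notin J$ cyclically (the right endpoint of each maximal arc). For a fixed $a\in[m]$ there are exactly $\binom{m-2}{i}$ subsets $J$ of size $i+1$ with $a\in J$ and $a+1\notin J$, so summing over $a$ gives $\sum_{J} c(J)=m\binom{m-2}{i}$, whence
\[
\beta^{-i,2(i+1)}(P)=(k+3)\binom{k+1}{i}-\binom{k+3}{i+1}.
\]
A short binomial simplification---expand $\binom{k+3}{i+1}$ using Pascal's rule twice and then apply $(r+1)\binom{n}{r+1}=(n-r)\binom{n}{r}$ to rewrite $(i+1)\binom{k+1}{i+1}$ and $(k+2-i)\binom{k+1}{i-1}$ as multiples of $\binom{k+1}{i}$---converts the right-hand side into $i\binom{k+1}{i+1}+(k+1-i)\binom{k+1}{k+2-i}$, giving~(a). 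I expect the only mildly subtle point to be the cyclic right-endpoint count, which has to segregate the $J=[m]$ case so as not to spuriously count a nonexistent arc; the ensuing binomial manipulation is then mechanical.
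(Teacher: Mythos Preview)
Your argument is correct. The paper does not supply its own proof of this proposition---it simply records the formula with a citation to \cite[Example~8.21]{B-P}---so there is nothing to compare against beyond noting that your Hochster-formula computation is exactly the standard route and is carried out accurately (the counting identity $\sum_{|J|=i+1}c(J)=m\binom{m-2}{i}$ and the binomial reduction both check out).
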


\begin{coro}
The bigraded Betti numbers of a generalized truncation polytope
$P$ depend only on the dimension and the number
of facets of $P$ and do not depend on its combinatorial type.
Moreover the numbers $\beta^{-i, 2(i+1)}(P)$ do not depend on the
dimension~$d$.
\end{coro}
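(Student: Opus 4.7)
The plan is to derive both assertions as immediate consequences of the explicit closed-form formulas established in Theorem~\ref{maintr} (and, for the edge cases $r=1$ and $d=2$, Propositions~\ref{trunc} and~\ref{polyg}). In every such formula, $\beta^{-i,2j}(P)$ is expressed purely in terms of the type data $(k; n_1,\ldots,n_r)$ of $P$ together with the homological degrees $i,j$; no finer combinatorial datum, such as the particular sequence of vertex cuts or the resulting face lattice of $P$, ever enters the right-hand side.

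For the first assertion, recall that a polytope of type $(k; n_1,\ldots,n_r)$ is obtained by performing $k$ successive vertex cuts on $\Delta^{n_1}\times\cdots\times\Delta^{n_r}$, and that different sequences of cuts can yield combinatorially distinct polytopes. Nevertheless all such polytopes share the same numerical invariants $d = n_1+\cdots+n_r$ and $m = d + r + k$, and—by the formulas referenced above, which depend only on the type data— the same bigraded Betti numbers. Hence $\beta^{-i,2j}(P)$ is independent of the combinatorial type of $P$ within its type.

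For the second assertion, read formula (b) of Theorem~\ref{maintr}:
\[
\beta^{-i,2(i+1)}(P) = k\binom{k+r-1}{i} - \binom{k}{i+1} + a\binom{k}{i-1},
\]
where $a$ counts the indices with $n_i=1$. The right-hand side involves only $k$, $r$, $a$, and $i$; the dimension $d$ does not appear. Consequently, increasing any $n_j\ge 2$ leaves $\beta^{-i,2(i+1)}(P)$ unchanged while $d$ grows. A brief check against Propositions~\ref{trunc} and~\ref{polyg} confirms that the same $d$-independence holds in the boundary cases $r=1$ with $n\ge 3$ and $d=2$, where the corresponding formulas depend only on $k$ and $i$.

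The argument amounts to a verification rather than a genuinely new construction, and there is no substantive obstacle to overcome. The only point requiring attention is to confirm that the right-hand side of Theorem~\ref{maintr}(b) is genuinely $d$-free, and that the edge cases collected in Propositions~\ref{trunc} and~\ref{polyg} exhibit the same independence under the appropriate identifications.
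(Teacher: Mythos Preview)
Your proposal is correct and takes essentially the same approach as the paper, which states the corollary without proof as an immediate consequence of the explicit formulas in Theorem~\ref{maintr} (together with Propositions~\ref{trunc} and~\ref{polyg} for the edge cases). Your observation that the right-hand sides involve only the type data $(k;n_1,\ldots,n_r)$, and that formula~(b) involves only $k$, $r$, $a$, $i$, is precisely the intended justification.
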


\begin{coro}
In the class of all generalized truncation polytopes $P$ of all possible types $(k;n_1,\ldots,n_r)$ the set of all bigraded Betti numbers $\{\beta^{-i,2j}(P)\}$ uniquely determines the type. For generalized truncation polytopes $P$ and $Q$ two bigraded rings $H^{*,*}(\mathcal Z_P)$ and $H^{*,*}(\mathcal Z_Q)$ are isomorphic if and only if all their bigraded Betti numbers are equal. 
\end{coro}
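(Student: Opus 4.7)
The corollary has two assertions; since the ``only if'' direction of the ring statement is automatic, the plan is to prove (i) that the bigraded Betti numbers determine the type $(k;n_1,\ldots,n_r)$, and (ii) that any two generalized truncation polytopes of the same type have isomorphic bigraded cohomology rings. The ``if'' direction of the ring statement then follows from (i) combined with (ii).

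For (i) I read off $m$ and $d$ from the corner values $\beta^{0,0}(P)=\beta^{-(m-d),2m}(P)=1$ supplied by Theorem~\ref{maintr}(c), so $k+r=m-d$ is already determined. Substituting $i=1$ into formula (a) of Theorem~\ref{maintr} collapses the sum, since $\binom{k}{1-s}$ vanishes for $s\neq 1$, yielding
\[
\beta^{-1,2(1+l)}(P)=\bigl|\{j:n_j=l\}\bigr|\qquad\text{for }1<l<d-1,
\]
which recovers the multiplicity of every value of $(n_1,\ldots,n_r)$ lying strictly between $1$ and $d-1$. The multiplicity $a$ of the value $n_j=1$ is then pinned down by formula (b) at $i=1$, namely $\beta^{-1,4}(P)=k(k+r-1)-\binom{k}{2}+a$, in which $k+r$ and the middle multiplicities are already known. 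The sole remaining case $n_j=d-1$ appears only in the degenerate subcases $r=1$ or $r=2$ with $n_2=1$ (truncation polytopes) and in polygons, handled directly by Propositions~\ref{trunc} and~\ref{polyg}. Once the multiset $\{n_j\}$ and thus $r$ is reconstructed, $k=m-d-r$ completes the type recovery.

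For (ii), by Theorem~\ref{zkcoh} each bigraded ring equals $\bigoplus_{I\subset[m]}\widetilde H^*(K_I)$ with products induced by the join maps $(*)$. The proof of Lemma~\ref{zero} together with its generalization used in the proof of Theorem~\ref{maintr} enumerates all $I$ with $\widetilde H^*(K_I)\neq 0$: on the middle and top diagonals these are precisely the subcomplexes $V(\Delta^{n_{j_1}})\cup\cdots\cup V(\Delta^{n_{j_s}})\cup NV_S$, each contributing a single copy of $\k$ spanned by the fundamental class of the sphere $\partial\Delta^{n_{j_1}}*\cdots*\partial\Delta^{n_{j_s}}$. Fixing a dimension-preserving bijection between the simplex factors of $P$ and those of $Q$ and any bijection between their $k$-element sets of new vertices transports these supports, and with them the join products, producing a bigraded ring isomorphism on the middle and top part. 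I expect the main obstacle to be extending this matching to the $l=1$ and $l=d-1$ diagonals, where the supporting subsets arise from disconnected $K_I$ configurations whose combinatorics depend on the particular vertex-cut sequence used to construct $P$; the workaround is that formula (b) forces the Betti numbers on these diagonals to depend only on the intrinsic type data $(k,r,a)$, so a basis-level bijection on them can be fixed numerically, and the Hochster products of such classes into the already-matched higher diagonals then determine the remaining ring structure uniquely.
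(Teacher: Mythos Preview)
The paper gives no proof of this corollary at all; it is stated as an immediate consequence of the explicit formulae in Theorem~\ref{maintr} (and of the sentence, a few lines later, that ``the result and proof of Theorem~\ref{maintr} \ldots give us the complete description of the cohomology ring of $\zp$''). So your write-up is already more detailed than the paper's, and your strategy for part~(i) is essentially the right one. One point should be tightened: when you invoke formula~(b) at $i=1$ to ``pin down'' $a$, the right-hand side $k(k+r-1)-\binom{k}{2}+a$ contains \emph{two} unknowns, $k$ and $a$ (you only know $k+r$, not $k$ separately), so that single equation does not determine $a$. The fix is easy and in fact makes formula~(b) unnecessary here: once you have excluded the possibility $n_j=d-1$ (which, as you note, forces a truncation-polytope subcase), the identity $d=\sum n_j$ gives $a=d-\sum_{1<n_j<d-1} n_j$ directly from the middle multiplicities, and then $r$ and $k$ follow. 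You should also flag that the ``type'' is not literally unique on the nose: the paper itself remarks that $r=1$ and $r=2,\,n_2=1$ both yield truncation polytopes (for instance $\Delta^{n-1}\times\Delta^1\cong\vc^1(\Delta^n)$), so the statement must be read modulo that identification.

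For part~(ii) there is a genuine gap. Your treatment of the ``middle'' diagonals is fine: the generators there are fundamental classes of joins of boundary spheres, and a bijection of simplex factors and of new vertices transports both the generators and their join products. But on the $l=1$ diagonal a generator lives in $\widetilde H^0(K_I)$ for a specific disconnected $K_I$ whose shape depends on \emph{which} vertices were truncated in which order; its Hochster products with other classes are governed by that combinatorics, not by a cardinality. Saying that ``a basis-level bijection can be fixed numerically'' and that the products into the higher diagonals ``then determine the remaining ring structure uniquely'' is exactly the step that needs an argument, and you have not supplied one. What is required is either an explicit basis on the $l=1$ and $l=d-1$ diagonals together with a verification that every product involving it is already forced by the type data $(k;n_1,\ldots,n_r)$, or a Poincar\'e-duality style argument showing that the only nonzero products involving these diagonals land in the top class and assemble into a hyperbolic form determined by the Betti numbers. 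The paper does not carry this out either, so filling it in would be a genuine contribution.
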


\begin{exam}

Consider a simple polytope $P=vc^1(\Delta^4\times\Delta^3\times\Delta^2)$, for which $d=9$, $m=13$. We calculate the bigraded Betti numbers of $P$ using software package \emph{Macaulay 2}, see~\cite{Mac}. 

The table below has $d-1$ rows and $m-d-1$ columns. The number
in the intersection of the $l$th row and the $i$th column is
$\beta^{-i,2(i+l)}(P)$, where $ 1\le{i}\le{m-d-1}$ and
$2\le{j=i+l}\le{m-2}$. The other bigraded Betti numbers are zero
except for $\beta^{0,0}(P)=\beta^{-(m-d),2m}(P)=1$,
see~\cite[Ch.8]{B-P}:
\begin{center}
\noindent\\[2pt]
\small
\begin{tabular}{|c|c|c|c|}
\hline $i,l$ & $i=1$ & $i=2$ & $i=3$\tabularnewline
\hline $l=1$ & 3 & 3 & 1\tabularnewline 
\hline $l=2$ & 1 & 1 & 0\tabularnewline 
\hline $l=3$ & 1 & 1 & 0\tabularnewline
\hline $l=4$ & 1 & 1 & 0\tabularnewline
\hline $l=5$ & 0 & 1 & 1\tabularnewline 
\hline $l=6$ & 0 & 1 & 1\tabularnewline
\hline $l=7$ & 0 & 1 & 1\tabularnewline
\hline $l=8$ & 1 & 3 & 3\tabularnewline
\hline
\end{tabular}
\end{center}
\medskip
\end{exam}

Theorem~\ref{zkcoh}, the result and proof of Theorem~\ref{maintr} together with the obvious fact that $K_P$ is free of torsion give us the complete description of the cohomology ring of $\zp$ for all $(k;n_1,\ldots,n_r)$-polytopes $P$. We will use it in the next section to examine minimal non-Golodness for $K_P$ and some topological properties of $\zp$ for generalized truncation polytopes.

\section{Minimal non-Golodness of $K_P$ and the topology of $\zp$}

\begin{prop}\label{glue}
Suppose $K=K_{1}\cup_{\sigma}K_{2}$ is a simplicial complex obtained from two Golod complexes $K_1$ and $K_2$ by gluing along a common simplex. Then $K$ is also Golod.
\end{prop}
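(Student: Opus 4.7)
The plan is to use Hochster's decomposition (Theorem~\ref{zkcoh}) $H^*(\zk) = \bigoplus_{I \subset V} \widetilde{H}^*(K_I)$, with multiplication induced by the simplicial inclusions $K_{I \cup J} \hookrightarrow K_I * K_J$ for disjoint $I, J$, and to verify that all products and higher Massey operations vanish. Write $V_a = V(K_a)$ for $a = 1, 2$, $\tau = V_1 \cap V_2 = V(\sigma)$, and for $W \subset V = V_1 \cup V_2$ set $W_a = W \cap V_a$.

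First I would describe the full subcomplexes of $K$. If $W \subset V_a$ for some $a$, then $K_W$ coincides with $(K_a)_W$. Otherwise $W$ meets both $V_1 \setminus \tau$ and $V_2 \setminus \tau$, and $K_W = (K_1)_{W_1} \cup (K_2)_{W_2}$, glued along the (possibly empty) simplex on $W \cap \tau$. Since a nonempty simplex is contractible, Mayer--Vietoris gives $\widetilde{H}^*(K_W) \cong \widetilde{H}^*((K_1)_{W_1}) \oplus \widetilde{H}^*((K_2)_{W_2})$, with one extra copy of $\k$ in $\widetilde{H}^0$ when $W \cap \tau = \emptyset$. The splitting is realised by the restriction maps $\widetilde{H}^*(K_W) \to \widetilde{H}^*((K_a)_{W_a})$ induced by $(K_a)_{W_a} \hookrightarrow K_W$.

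Next I would establish vanishing of products via functoriality. For disjoint $I, J \subset V$ and $a \in \{1, 2\}$, the Hochster inclusion for $K$ and the Hochster inclusion for $K_a$ fit into a commutative diagram
\[
\begin{array}{ccc}
(K_a)_{(I \cup J)_a} & \hookrightarrow & (K_a)_{I_a} * (K_a)_{J_a} \\
\downarrow & & \downarrow \\
K_{I \cup J} & \hookrightarrow & K_I * K_J,
\end{array}
\]
with verticals induced by $K_a \hookrightarrow K$ (one checks this simplicially by splitting any face $\sigma' \subset (I \cup J) \cap V_a$ as $(\sigma' \cap I_a) \cup (\sigma' \cap J_a)$). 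Consequently, the restriction to $(K_a)_{(I \cup J)_a}$ of the Hochster product $\alpha \cdot \beta \in \widetilde{H}^*(K_{I \cup J})$ coincides with the Hochster product in $K_a$ of the Mayer--Vietoris restrictions $\alpha|_{(K_a)_{I_a}}$ and $\beta|_{(K_a)_{J_a}}$, which vanishes by Golodness of $K_a$. Since the connecting $\k$ in the splitting lives only in degree $0$ while $\alpha \cdot \beta$ has degree $|\alpha| + |\beta| + 1 \ge 1$, both components of $\alpha \cdot \beta$ in the splitting are zero, and hence $\alpha \cdot \beta = 0$.

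The main obstacle will be extending this argument to all higher Massey operations. Golodness requires every Massey bracket to contain zero (not merely to be indeterminate up to coboundary), so I would work with the Koszul DGA $\Lambda[u_1, \ldots, u_m] \otimes \k[K]$ and exploit the functorial cochain-level splitting refining the Mayer--Vietoris decomposition above. Choosing defining cochains along the $K_a$-decomposition inductively produces null cochains at every stage: Golodness of each $K_a$ kills the same-side parts and functoriality kills the cross-side parts. Making these compatible choices precise is the technical heart of the argument, but it introduces no new geometric ideas beyond those already present for products.
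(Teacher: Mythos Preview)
Your product argument is correct and takes a different route from the paper. The paper works at the simplicial cochain level with basis cochains $\alpha_L$: since $\alpha_L \cdot \alpha_M$ is nonzero only when $L \cup M$ is a simplex of $K$, and every simplex of $K$ lies entirely in $K_1$ or in $K_2$, each nonzero term of a cochain product already belongs to one piece, after which Golodness of that piece is invoked. You instead use Mayer--Vietoris on full subcomplexes together with naturality of the Hochster product under the inclusions $K_a \hookrightarrow K$; this is more conceptual and avoids tracking individual simplices. One small imprecision: the restriction map $\widetilde H^*(K_W) \to \widetilde H^*((K_1)_{W_1}) \oplus \widetilde H^*((K_2)_{W_2})$ is an isomorphism only in positive degrees (it has a one-dimensional kernel in degree~$0$ when $W \cap \tau = \varnothing$), so it does not literally ``realise the splitting''; but since any Hochster product lands in degree $\ge 1$ this does not affect your conclusion.

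On higher Massey operations, the paper's proof does not treat them at all, while you at least flag them and outline a strategy. Your sketch (choose defining systems compatibly with the cochain-level splitting, use functoriality plus Golodness of the $K_a$ inductively) is plausible but is genuinely where the work lies; as written it is not a proof. In this respect your proposal is no less complete than the paper's own argument, but you should be aware that neither constitutes a full verification of Golodness in the sense defined.
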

\begin{proof}
Following the description of multiplication in the Hochster ring from Theorem~\ref{zkcoh} (*) and using~\cite[Proposition 3.2.10, formula (3.11)]{TT}, we denote by $\alpha_L$ the basis cochain in $C^{p}(K_I)$ corresponding to an oriented $p$-simplex $L\subset I$, $I\subset V(K)$. Then the product of cochains $\alpha_L$ and $\alpha_M$, $M\subset J$, $J\subset V(K)$, $I\cap J=\varnothing$ is nontrivial, if and only if $L\cup M$ is a simplex in $K_{I\cup J}$. Let us also denote $I_k=I\cap V(K_k)$ and $J_k=J\cap V(K_k)$ for $k=1,2$.

Suppose for a pair of disjoint subsets $I,J\subset V(K)$ we have a nontrivial product by the formula Theorem~\ref{zkcoh} (*). By the above argument for some simplices $L\subset I$ and $M\subset J$ we have that $L\cup M$ is also a simplex in $K_{I\cup J}$. Then by the gluing construction we get that either $L\subset I_1$, $M\subset J_1$, $L\cup M\subset K_1$ or $L\subset I_2$, $M\subset J_2$, $L\cup M\subset K_2$.

But the two complexes $K_1$ and $K_2$ are Golod, so the product of $\alpha_L$ and $\alpha_M$ is a coboundary in $C^{*}(K)$ and so $K$ is Golod itself. 
\end{proof}

Now we prove our main result in this section:

\begin{theo}\label{maingolod}
For $P=\vc^{k}(\Delta^{n_{1}}\times\ldots\times\Delta^{n_{r}})$ with $n_1\ge\ldots\ge n_r\ge 1,r\ge 1,k\ge 0$, its face ring $K_P$ is minimally non-Golod if and only if $r=1,2$ and $P$ is not a simplex.
\end{theo}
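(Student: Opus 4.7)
The plan is to prove both directions of the equivalence.

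For the \emph{necessary} direction: if $P$ is a simplex then $K_P=\partial\Delta^d$, so $\mathcal{Z}_P\cong S^{2d+1}$ has trivial cohomology ring, $K_P$ is Golod, and hence not minimally non-Golod. If $r\ge 3$, I would pick any vertex $v\notin V(\Delta^{n_1})\cup V(\Delta^{n_2})$ (which exists since $V(\Delta^{n_3})$ is nonempty) and show that $K_P-v$ is still not Golod, which suffices. The essential point is that the full subcomplex of $K_P$ on $V(\Delta^{n_1})\cup V(\Delta^{n_2})$ equals $\partial\Delta^{n_1}*\partial\Delta^{n_2}\cong S^{n_1+n_2-1}$: each facet of $K_P$ (dual to a vertex of $P$) has exactly $n_i$ vertices in each $V(\Delta^{n_i})$, so its intersection with $V(\Delta^{n_1})\cup V(\Delta^{n_2})$ is an $(n_1+n_2-1)$-simplex --- a proper face preserved under stellar subdivision. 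By Theorem~\ref{zkcoh}, the product in the Hochster ring of the sphere classes in $\widetilde{H}^{n_1-1}(K_{V(\Delta^{n_1})})$ and $\widetilde{H}^{n_2-1}(K_{V(\Delta^{n_2})})$ realizes the fundamental class of this sphere and is nonzero, ruling out Golodness of $K_P-v$.

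For the \emph{sufficient} direction, the case $r=1$ is \cite[Theorem 6.19]{B-J}. For $r=2$ I need to show $K_P$ is non-Golod and that $K_P-v$ is Golod for every vertex $v$. Non-Golodness follows from the fact that $\mathcal{Z}_P$ is a closed orientable manifold (by \cite[Lemma 7.2]{B-P}) that is not a sphere when $P$ is not a simplex, so Poincar\'e duality forces a nonzero cup product; this can also be read directly off Theorem~\ref{maintr3}. To verify $K_P-v$ is Golod I split on $v$. If $v\in V(\Delta^{n_i})$, the sphere class at $V(\Delta^{n_i})$ dies because $K_{V(\Delta^{n_i})-v}$ is a simplex, and one checks by direct inspection of full subcomplexes (using Lemma~\ref{zero} and its proof) that the surviving nontrivial Hochster-ring classes all sit on subsets with nonempty pairwise intersection, so their products are forced to zero. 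If $v\in NV$ is a new vertex, I would proceed by induction on $k$ together with Proposition~\ref{glue}, writing $K_P-v$ as a union of two Golod subcomplexes glued along a common simplex.

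The main obstacle will be the case $v\in NV$ of the sufficient direction. Since removing a new vertex leaves $V(\Delta^{n_1})$ and $V(\Delta^{n_2})$ intact, the ``sphere classes'' at $V(\Delta^{n_1})$ and $V(\Delta^{n_2})$ persist; however, for $r=2$ every facet of $K_{P_0}=\partial\Delta^{n_1}*\partial\Delta^{n_2}$ has all of its $d$ vertices in $V(\Delta^{n_1})\cup V(\Delta^{n_2})$, so each stellar subdivision actually removes the interior of a $(d-1)$-simplex from the full subcomplex on $V(\Delta^{n_1})\cup V(\Delta^{n_2})$. Consequently this full subcomplex becomes a sphere with holes --- contractible for $k=1$, homotopy equivalent to a wedge of $(d-2)$-spheres for $k\ge 2$ --- and the sphere-class product vanishes. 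The delicate step is then to rule out all remaining products and Massey products, for which identifying the right decomposition of $K_P-v$ that triggers Proposition~\ref{glue} (for instance, by peeling off the pyramid over a facet adjacent to $v$ and recognising the complement as a lower-$k$ instance of the theorem, possibly reducing to a truncation polytope handled by the $r=1$ case) is the combinatorial heart of the argument.
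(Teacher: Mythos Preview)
Your overall plan matches the paper's in spirit, but your case split in the sufficient direction (for $r=2$) is reversed, and this reversal creates a genuine obstacle.

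The paper organises the inductive step as follows. With $Q=\vc(P)$, $K=K_Q$, and $v$ the new apex over a facet $F$ of $P^*$:
\begin{itemize}
\item[\textbf{A.}] If the deleted vertex is a new vertex which is the apex of \emph{exactly one} pyramid (the ``outermost'' new vertex), then $K-v$ is $K_P$ with the interior of $F$ removed. The paper does \emph{not} decompose this via Proposition~\ref{glue}; instead it invokes the induction hypothesis that $K_P$ is already minimally non-Golod, so any nontrivial product in $H^*(\mathcal Z_{K'})$ must come from one in $H^*(\mathcal Z_{K_P})$, and these pair only to the top class. Two short subcases (according to whether $V(F)$ meets both $I,J$ or is contained in one of them) then force the product to land in a $(d-1)$-dimensional cohomology group of a proper full subcomplex, which vanishes.
\item[\textbf{B.}] If the deleted vertex lies in $V_1\cup V_2$, \emph{or} is a new vertex which is the apex of more than one pyramid, then $K-v$ genuinely decomposes as $(K_1-v_1)\cup_{\sigma_1}\cdots\cup_{\sigma_l}(K_l-v_l)$ along common simplices, with each $K_i$ the boundary complex of some $\vc^{p}(\Delta^{n_1}\times\Delta^{n_2})$ with $p<k$ or some $\vc^p(\Delta^n)$. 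Proposition~\ref{glue} plus induction and the $r=1$ case finishes.
\end{itemize}

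Your split runs the other way: direct Hochster inspection for $v\in V(\Delta^{n_i})$, decomposition for $v\in NV$. The second of these is where the trouble lies. When $v\in NV$ is the apex of exactly one pyramid, $K_P-v$ is a single triangulated $(d-1)$-disk (the dual sphere with one open facet removed); there is no smaller pyramid ``adjacent to $v$'' to peel off, and Proposition~\ref{glue} has nothing to grip. This is precisely why the paper treats Case~A by a different mechanism. Conversely, your direct Hochster check for $v\in V(\Delta^{n_i})$ is more involved than stated: by Lemma~\ref{zero} the nontrivial classes live in degrees $0$, $n_1-1$, $n_2-1$ \emph{and} $d-2$, so ``all surviving classes sit on pairwise intersecting subsets'' is not evident, and Massey products are untouched. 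The paper sidesteps all of this by routing $v\in V_1\cup V_2$ through Case~B, where Proposition~\ref{glue} handles products in one stroke.

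Your necessary direction ($P$ a simplex; $r\ge 3$) is correct and essentially the paper's argument, with a more explicit justification: your observation that for $r\ge 3$ the full subcomplex on $V(\Delta^{n_1})\cup V(\Delta^{n_2})$ survives stellar subdivision is exactly the content of the paper's one-line claim that the product of the two sphere classes ``remains nontrivial'' after deleting a vertex outside those two factors.
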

\begin{proof}

The proof is by induction on the number of new vertices $k$.\\

Suppose $r=2$. We use notation from the proof of Theorem~\ref{maintr}.\\

If $k=0$ then $P=\Delta^{n_1}\times\Delta^{n_2}$ and $K_P=(\partial\Delta^{n_{1}})*(\partial\Delta^{n_2})$. Suppose we delete a vertex $v$ that belongs to $\Delta^{n_2}$ from the complex $K=K_P$. Then we have: $K'=K-\,v=(\partial\Delta^{n_1}*\Delta^{n_{2}-1})$ and $\mathcal Z_{K'}=\mathcal Z_{\partial\Delta^{n_1}}\times\mathcal Z_{\Delta^{n_{2}-1}}=S^{2n_{1}+1}\times D^{2n_{2}}$ which is homotopy equivalent to a sphere and so $K'$ is Golod by Theorem~\ref{zkcoh}. \\

For $k\ge 1$, we denote $Q=\vc(P)$, $K=K_Q$, $KK=K_P$, $V_{1}=V(\Delta^{n_1})$, $V_{2}=V(\Delta^{n_2})$ and use the description of multiplication from Theorem~\ref{zkcoh} (see formula (*)) and Lemma~\ref{zero} for generalized truncation polytopes.\\

{\noindent{\bf A.}} Suppose we delete a vertex $v\in NV$ which is a vertex of exactly one pyramid (over a facet $F$ of $P^*$).\\

By the description of multiplication in Theorem~\ref{zkcoh} (*) a nontrivial product in cohomology ring of $\mathcal Z_{K'}$ arises from a pair of subsets $I,J\subset{V(K')}$, $I\cap{J}=\varnothing$ and $I\cup J=V(K_P)$ and we may assume (Lemma~\ref{zero}) that $V_{1}\subset{I}$, $V_{2}\subset{J}$. We consider the following 2 cases: \\

{\noindent{\textit{Case 1:}}} $I,J\cap V(F)\neq\varnothing$.\\

We know by induction that $KK=K_P$ is minimally non-Golod and any nontrivial product in $H^*(\mathcal Z_{K'})$ should arise from $H^*(\mathcal Z_{K_{P}})$ (here we use Theorem~\ref{zkcoh}, formula (*)). However the full subcomplexes $K'_{I}=KK_I$ and $K'_{J}=KK_J$ cannot give rise to a nontrivial product, otherwise in the formula (*) we get a $(n_{1}-1)+(n_{2}-1)+1=(d-1)$-dimensional cohomology class of a full subcomplex of $K'$, which is obviously trivial.\\

{\noindent{\textit{Case 2:}}} $V(F)$ is contained in one of the $I,J$, say $V(F)\subset{I}$.\\

Then $H^{d-2}(K_I)\cong\k$ and therefore we have $p-|I|-1=d-2$ in Theorem~\ref{zkcoh} (*) and so $p+q-|I|-|J|-1\geq{d-1}$ in (*), which is impossible.\\

{\noindent{{\bf B.}}} Suppose $v$ is either in $V_{1}\cup{V_2}$ or a new vertex, which is not a vertex of exactly one added pyramid over a facet of $P^*$.\\ 

Then we can use minimal non-Golodness for truncation polytopes (see~\cite{B-J}) and induction by $k$ for the case $r=2$: we see, that $K'$ decomposes as ${(K_{1}-\,v_1)}\cup_{\sigma_1}\ldots\cup_{\sigma_l}{(K_{l}-\,v_l)}$, gluing along a common simplex $\sigma_i$, where $K_{i}$ is either a boundary complex of $\vc^{p}(\Delta^{n_1}\times\Delta^{n_2})$ with $p<k$ or a boundary complex of $\vc^{p}(\Delta^{n})$. Therefore, using Proposition~\ref{glue}, we obtain that $K'$ is Golod.\\  

As $K_P$ is not Golod itself and we proved that $K'=K_P-\,v$ is Golod for any $v$, it follows that $K_P$ is minimally non-Golod in the case $r=2$.\\

The case $r=1$ can be proved with the same argument as $r=2$. The only difference is that we do not need to consider generalized truncation polytopes $\vc^{p}(\Delta^{n_1}\times\Delta^{n_2})$ in the case {\bf{B}}.\\

It remains to consider the case $r\ge 3$.\\

For $k=0$ it is clear that $\mathcal Z_{K_{P}-\,v}$ will be homotopy equivalent to a product of $\ge 2$ spheres for any $v$ and therefore $K=K_P$ is not minimally non-Golod itself anymore.\\

For $k\ge 1$ consider $v\in\Delta^{n_1}$. Then the nontrivial multiplication for cohomology classes of $\partial(\Delta^{n_2})$ and $\partial(\Delta^{n_3})$ in the cohomology ring of $\mathcal Z_K$ (by Theorem~\ref{zkcoh}) remains nontrivial in $K'=K-\,v$ and then $K'$ is not Golod.
\end{proof} 

\begin{rema}
The case $r=1$ of Theorem~\ref{maingolod} was proved in~\cite[Theorem 6.19]{B-J}; the crucial point in~\cite{B-J} was to prove combinatorially the 'strong gcd-condition' for $K_P-\,v$ complexes, where $P$ is a truncation polytope. In that proof the fact that the complex dual to a truncation polytope (a stacked polytope) is a union of 'stacks' was used. For generalized truncation polytopes this no longer takes place.
\end{rema}

\begin{exam}
For a vertex cut of a 3-cube $P=\vc^1(\Delta^1\times\Delta^1\times\Delta^1)$, suppose $v_7$ is the new vertex in $K_P$. Then the complex $K'=K_P-\,v_7$ is {\bf{not}} Golod, as there is an induced 4-cycle in its 1-skeleton $sk^1(K')$. The latter is not a chordal graph, which is necessary for $K'$ to be Golod (\cite[Prop. 6.4]{B-J}). 
\end{exam}
According to~\cite[Theorem 2.2]{G-LdM}, the moment-angle manifold $\zp$ corresponding to a generalized truncation polytope $P$ with $r=2,n_{1}\ge{n_{2}}>1,k\ge 0$ is diffeomorphic to a connected sum of sphere products with 2 spheres in each product, if $m=n_{1}+n_{2}+2+k<3(n_{1}+n_{2})=3d$, that is, for all $0\leq k<2(n_{1}+n_{2}-1)$. It is easy to see that Betti numbers of $\zp$ obtained from this description are equal to those calculated using Theorem~\ref{maintr3} and formula $(1.3)$. 

\begin{exam}\label{trconnsum}
{\noindent{\bf{1.}}} Consider $P=\vc^1(\Delta^4\times\Delta^3)$ with $d=7,\,m=10$. Then we get: 
$$\zp\cong 2S^3\times S^{14}\#S^4\times S^{13}\#S^7\times S^{10}\#S^8\times S^9.$$
Here $b_{5}(\zp)=b_{6}(\zp)=0$ and $\zp$ is {\bf{not}} homotopy equivalent to any $\zp$ for truncation polytopes, see Theorem~\ref{b-m} below. \\ 
{\noindent{\bf{2.}}} Consider $P=\vc^1(\Delta^1\times\Delta^1\times\Delta^1)$.
Using Theorem~\ref{zkcoh} it is proved in~\cite[Example 11.5]{B-M} that the corresponding moment-angle manifold $\zp$ is {\bf{not}} homotopy equivalent to a connected sum of products of any number of spheres. See also Example on pp. 26--27 of~\cite{G-LdM}.   
\end{exam}

The topological types of moment-angle manifolds corresponding to truncation polytopes can be described by means of the following result firstly obtained by D.McGavran in~\cite{McG}:

\begin{theo}[{see \cite[Theorem 6.3]{B-M}}]\label{b-m}
Let $P=\vc^{k}(\Delta^{n})$ for $n\ge 2$ be a truncation polytope. Then the
corresponding moment-angle manifold $\zp$ is
diffeomorphic to the connected sum of sphere products:
\[
  \mathop{\#}_{j=1}^{k}
  \bigl(S^{j+2}\times S^{2n+k-j-1}\bigr)^{\#j\binom{k+1}{j+1}},
\]
where $X^{\#k}$ denotes the connected sum of $k$ copies of~$X$.
\end{theo}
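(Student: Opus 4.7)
The plan is to proceed by induction on $k$, the number of truncated vertices. For the base case $k=0$ one has $P = \Delta^n$ and $\zp = S^{2n+1}$, in agreement with the convention that the empty connected sum is a sphere of the appropriate total dimension $2n+1 = m+n$ with $m=n+1$.

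For the inductive step, set $P = \vc^{k-1}(\Delta^n)$ and $P' = \vc(P)$ and try to describe $\mathcal Z_{P'}$ as obtained from $\zp$ by an equivariant local modification at the cut vertex. The key observation is that near any simple vertex $v$ of a simple $n$-polytope with $m$ facets, the preimage of a small closed neighborhood of $v$ in the moment-angle manifold is an equivariant tube $D^{2n} \times T^{m-n}$; performing the vertex cut removes this tube and glues back the moment-angle complex of a prism over a simplex. Under the hypothesis that we are in the class of truncation polytopes, this exchange should be expressible as a connected sum of $\zp$ with a specific product of two spheres whose dimensions are forced by $m$ and $n$, together with a uniform upward shift of the existing summands to account for the dimension of $\zp$ increasing by one.

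To identify the precise summands and their multiplicities I would match bigraded Betti numbers. The candidate manifold on the right-hand side is simply connected with torsion-free integer cohomology, so its diffeomorphism type is determined by its cohomology ring: each summand $S^{j+2} \times S^{2n+k-j-1}$ contributes exactly $1$ to $b^{j+2}$ and to $b^{2n+k-j-1}$. Comparing with Proposition~\ref{trunc} via formula $(1.3)$, the identity $\beta^{-j,2(j+1)}(P') = j\binom{k+1}{j+1}$ forces the multiplicity of the summand $S^{j+2} \times S^{2n+k-j-1}$ to be $j\binom{k+1}{j+1}$, and Poincar\'e duality (Theorem~\ref{Poinc}) provides a consistency check with the high-dimensional Betti numbers.

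The main obstacle is the surgery formula itself, i.e.\ knowing a priori that $\mathcal Z_{P'}$ is still a connected sum of sphere products with two factors in each piece, and not some topologically more intricate manifold with the same Betti numbers. In the subrange $k < 2n-1$ (equivalently $m < 3n$) this is automatic from the Gitler--L\'opez de Medrano theorem cited just before the statement, so Betti-number matching alone closes the induction. Outside that range the cleanest route is an independent equivariant handle-decomposition argument in the style of McGavran~\cite{McG}: assemble $\zp$ step by step, checking that each vertex cut attaches a single pair of equivariant handles realizing a standard sphere-product connected summand of the predicted dimensions, and that no unexpected attaching map appears because the dual simplicial complex of a truncation polytope is a stacked sphere whose simplicial structure controls the gluing explicitly.
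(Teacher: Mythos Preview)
The paper does not prove this theorem at all: it is stated as a cited result, attributed first to McGavran~\cite{McG} and then to~\cite[Theorem~6.3]{B-M}, and is used only as input for the discussion that follows. There is therefore no ``paper's own proof'' to compare your sketch against.

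As for the sketch itself, the overall strategy (induction on $k$, equivariant surgery at the cut vertex, Betti-number bookkeeping via Proposition~\ref{trunc}) is the right shape, and you correctly identify the real difficulty: knowing in advance that $\mathcal Z_{P'}$ stays in the class of connected sums of two-sphere products after the cut. But your resolution of that difficulty is essentially ``use McGavran's handle argument'', which is precisely the content of the cited theorem, so the sketch is circular rather than independent. One statement is also wrong as written: a simply connected closed manifold with torsion-free integral cohomology is \emph{not} in general determined up to diffeomorphism by its cohomology ring; what you actually need (and what the McGavran/Bosio--Meersseman arguments supply) is the stronger fact that the surgery produces a \emph{standard} connected-sum summand, after which Betti-number matching is legitimate.
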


\begin{rema}

{\noindent{\bf 1.}}
The author doesn't know any example of a simple polytope $P$ s.t. $\zp$ is topologically a {\bf{nontrivial}} connected sum of sphere products including a product of {\bf{three}} spheres (see also~\cite{G-LdM}).

{\noindent{\bf 2.}}
It is proved in~\cite[Theorem 1.3]{G-LdM} that a moment-angle manifold corresponding to an {\bf{even}} dimensional dual neighbourly polytope is diffeomorhic to a connected sum of sphere products. Below we prove minimal non-Golodness for the Stanley--Reisner rings in the case of even dimensional dual neighbourly polytopes. 
\end{rema}

\begin{prop}\label{neighbour}
If $P$ is an $n$-dimensional dual neighbourly polytope, $n$ is even, then $K_P$ is minimally non-Golod.
\end{prop}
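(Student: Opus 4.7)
The approach is to verify separately the two conditions of minimal non-Golodness: that $K_P$ is not itself Golod and that $K_P-w$ is Golod for each vertex $w$ of $K_P$. The first condition is immediate from [G-LdM, Theorem 1.3]: since $P$ is even-dimensional dual neighbourly, $\zp$ is diffeomorphic to a connected sum of products of two spheres, and unless $P=\Delta^n$ this connected sum carries nontrivial cup products (the Poincar\'e pairings within each $S^{a_i}\times S^{b_i}$ summand). By Theorem~\ref{zkcoh} the multiplication in $\Tor(\k[K_P],\k)$ is therefore nontrivial, so $K_P$ itself is not Golod.

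The substantive half is the second, and its key structural input is that in any connected sum $\#_i(S^{a_i}\times S^{b_i})$ the only nontrivial cup products in cohomology are the within-summand Poincar\'e pairings $\alpha_i\cdot\beta_i=[\zp]$; cross-summand products and squares vanish. Under the Hochster identification $H^q(\zp)\cong\bigoplus_I\widetilde H^{q-|I|-1}((K_P)_I)$ the fundamental class $[\zp]$ is supported on the full vertex set $V=V(K_P)$, so whenever disjoint $I,J\subset V$ produce a nontrivial cup product we must have $I\sqcup J=V$. Equivalently, for every disjoint pair $I,J\subset V$ with $I\cup J\subsetneq V$ the Hochster cup product map
\[
\widetilde H^*((K_P)_I)\otimes\widetilde H^*((K_P)_J)\longrightarrow\widetilde H^*((K_P)_{I\cup J})
\]
from Theorem~\ref{zkcoh} vanishes. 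Now fix a vertex $w$ and set $K'=K_P-w$. The ring $H^*(\mathcal Z_{K'})$ is computed from the same Hochster maps, indexed however by disjoint $I,J\subset V\setminus\{w\}$; every such pair has $I\cup J$ properly contained in $V$, so every cup product in $H^*(\mathcal Z_{K'})$ is zero, giving trivial multiplication in $\Tor(\k[K'],\k)$.

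To upgrade trivial multiplication to full Golodness of $K'$ one must still show that all higher Massey operations in $\Tor(\k[K'],\k)$ are trivial. My plan is to argue this by the same dimensional principle: any Massey product defined on classes supported in $V\setminus\{w\}$ would lie in $\widetilde H^*((K_P)_{I_1\cup\cdots\cup I_r})$ with union a proper subset of $V$, while in the ambient manifold $\zp$ — a connected sum of products of spheres, hence formal — all such operations vanish modulo indeterminacy and in particular cannot reach the top class supported on $V$. The main obstacle is making this comparison precise, since Massey products in $\mathcal Z_{K'}$ are not literally inherited from those in $\zp$; one natural route is to compare via the inclusion $\mathcal Z_{K'}\hookrightarrow\zp$ (after restoring the spare $S^1$-factor corresponding to $w$), or to leverage the explicit connected-sum description of $\zp$ in [G-LdM] to build a wedge-of-spheres, hence formal, model for $\mathcal Z_{K'}$. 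Granted this, the conclusion is that every $K_P-w$ is Golod, proving that $K_P$ is minimally non-Golod.
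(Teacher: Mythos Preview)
Your argument for the cup-product half is correct but takes a very different route from the paper. The paper does not invoke the Gitler--L\'opez de Medrano diffeomorphism result at all; it uses only the combinatorial definition of neighbourliness. Since every $n/2$ vertices of $K_P$ span a simplex, each full subcomplex $K_I$ contains the full $(n/2-1)$-skeleton of $\Delta^{|I|-1}$, so $\widetilde H^i(K_I)=0$ for $i<n/2-1$. Hence any nontrivial Hochster product lands in simplicial dimension at least $2(n/2-1)+1=n-1$; but no proper full subcomplex of the $(n-1)$-sphere $K_P$ carries $(n-1)$-cohomology, so after deleting a vertex all products vanish. Your approach reaches the same conclusion by first knowing that $\zp$ is a connected sum of sphere products, which is a much deeper input; it works, but it essentially imports one direction of the very conjecture (\cite[Question~3.5]{G-P-T-W}) that the paper is trying to give independent evidence for.

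On the Massey-product issue: you are right to flag it, and the paper is silent here too. However, the paper's dimension count \emph{does} extend to higher operations with almost no extra work --- an $r$-fold Massey product of classes each in simplicial degree $\ge n/2-1$ lands in degree $\ge r(n/2-1)+(r-1)>n-1$ for $r\ge 3$ and even $n\ge 2$, which again exceeds the dimension of any full subcomplex of $K_P-w$. By contrast, your proposed route through formality of $\zp$ and a comparison $\mathcal Z_{K'}\hookrightarrow\zp$ runs into exactly the naturality problem you identify: Massey products are not functorial in the required direction, and I do not see how to make the ``wedge-of-spheres model for $\mathcal Z_{K'}$'' argument go through without essentially redoing Gitler--L\'opez de Medrano one level down. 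So the elementary neighbourliness bound is both lighter and more robust here.
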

\begin{proof}
By definition, every $\lbrack{\frac{n}{2}}\rbrack$ vertices of $K_P$ form a simplex. Therefore, by Theorem~\ref{zkcoh} any nontrivial full subcomplex cohomology group of positive dimension in the formula (*) is of dimension $\geq\lbrack{\frac{n}{2}}\rbrack-1$. Then for the product in cohomology by (*) we get a nontrivial cohomology class only in dimension $\geq 2(\lbrack{\frac{n}{2}}\rbrack-1)+1=(n-1)$ for even $n$. But this leads to a contradiction, as $(n-1)$-dimensional cohomology group of any full subcomplex of $K'$ is obviously trivial.
\end{proof}

\begin{exam}
By Theorem~\ref{maintr} (b) we see that for generalized truncation polytopes $P$ with $k\geq 1$: $b_3(\zp)=\beta^{-1,4}(P)\neq 0$. On the other hand, if $P$ is a dual neighbourly $n$-polytope, $n\geq 4$, then by~\cite[Proposition 7.34.2]{B-P} and the Hurewicz Theorem one has $b_3(\zp)=\ldots=b_{2[\frac{n}{2}]}(\zp)=0$. Therefore $\zp$ for generalized truncation polytopes $P$ with $k\geq 1$ are {\bf{not}} homotopy equivalent to any $\zp$ for dual neighbourly polytopes $P$.  
\end{exam}

The following statement follows easily from Theorem~\ref{zkcoh}, Theorem~\ref{maintr3}, Theorem~\ref{b-m} and Remark 2 above:

\begin{coro}
In the class of all polytopes $P$ which are either generalized truncation polytopes with $r=1$ and $r=2,0\leq k<2(n_{1}+n_{2}-1)$ or even dimensional dual neighbourly polytopes, the set of all bigraded Betti numbers of $P$ determines the cohomology ring of $\zp$ up to ring isomorphism and the topological type of $\zp$ up to diffeomorphism.
\end{coro}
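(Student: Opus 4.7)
The plan is to reduce the corollary to the following structural statement which, in each of the three classes, is already available: $\zp$ is diffeomorphic to a connected sum of products of pairs of spheres whose dimensions sum to $\dim\zp=m+n$, and the multiset of sphere pairs is recovered from the bigraded Betti numbers by means of the Poincar\'e duality involution on bidegrees. Once this is in hand, both $H^*(\zp)$ as a ring and the diffeomorphism class of $\zp$ are determined, since a connected sum of simply connected $2$-sphere products is classified by the unordered list of summands, and its cohomology ring is the direct sum of the summand rings in degrees $0<{*}<m+n$ with common top class.

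First I would invoke, case by case, the connected sum descriptions already recalled in the paper: for $r=1$, Theorem~\ref{b-m}; for $r=2$ with $0\le k<2(n_1+n_2-1)$, \cite[Theorem 2.2]{G-LdM} (cited before Example~\ref{trconnsum}); for even dimensional dual neighbourly polytopes, \cite[Theorem 1.3]{G-LdM} (recalled in Remark 2 above). Consequently, in all three classes one has
\[
\zp\cong\mathop{\#}_{\alpha}\bigl(S^{p_\alpha}\times S^{q_\alpha}\bigr)^{\#n_\alpha},\qquad p_\alpha+q_\alpha=m+n.
\]

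Next I would read off the multiset $\{(p_\alpha,q_\alpha);\,n_\alpha\}$ from the bigraded Betti numbers. By Theorem~\ref{zkcoh} each cohomology generator of $\zp$ carries a well defined bidegree $(-i,2j)$ with $2j-i$ equal to its topological degree. For a summand $S^{p_\alpha}\times S^{q_\alpha}$ the two sphere classes must cup to the top class in bidegree $(-(m-n),2m)$, which forces them to occupy Poincar\'e dual bidegrees in the sense of Theorem~\ref{Poinc}. Hence for every pair of nonzero Poincar\'e dual bidegrees $\{(-i_\alpha,2j_\alpha),\,(-(m-n)+i_\alpha,\,2(m-j_\alpha))\}$ strictly between $0$ and $(-(m-n),2m)$, the common value $\beta^{-i_\alpha,2j_\alpha}(P)$ equals the multiplicity $n_\alpha$ of the sphere pair $(S^{p_\alpha},S^{q_\alpha})$ with $p_\alpha=2j_\alpha-i_\alpha$, subject to the obvious halving in a self Poincar\'e dual middle bidegree.

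The main technical point I anticipate is the bookkeeping of coincidences of bidegrees: notably the factor of $2$ in Theorem~\ref{maintr3}(a) when $n_1=n_2$, and the potential middle dimensional self dual bidegree when $m+n$ is even. Both are resolved once the Poincar\'e duality involution on bigraded Betti numbers from Theorem~\ref{Poinc} is made explicit, after which the multiset of sphere pairs, and therefore the cohomology ring and the diffeomorphism type of $\zp$, is uniquely reconstructed from the bigraded Betti numbers of $P$.
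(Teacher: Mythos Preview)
Your approach is essentially the same as the paper's: the paper gives no proof beyond the single sentence ``follows easily from Theorem~\ref{zkcoh}, Theorem~\ref{maintr3}, Theorem~\ref{b-m} and Remark~2 above,'' and your write-up is precisely an unpacking of that sentence---invoke the connected-sum theorems for each of the three subclasses, then read the list of summands off the Betti numbers.

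One small imprecision is worth flagging. You assert that for each Poincar\'e-dual pair of bidegrees the value $\beta^{-i,2j}(P)$ \emph{equals} the multiplicity of the pair $(S^{2j-i},S^{m+n-2j+i})$ in the connected sum. That is not literally true in general: several bidegrees can land on the same diagonal $2j-i=p$ (this actually happens in Theorem~\ref{maintr3}, e.g.\ $\beta^{-i,2(i+1)}$ and $\beta^{-i',2(i'+n_2)}$ can contribute to the same topological degree), so the multiplicity of $S^p\times S^{m+n-p}$ is the \emph{sum} $b^p(\zp)=\sum_{-i+2j=p}\beta^{-i,2j}(P)$ (halved if $p=\tfrac{m+n}{2}$). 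This does not affect your conclusion, since the bigraded numbers determine the ordinary ones via~(1.3), and the ordinary Betti numbers together with $\dim\zp$ already pin down the summands; but the sentence as written should be adjusted. Your anticipated ``bookkeeping'' paragraph about Theorem~\ref{maintr3}(a) and the self-dual middle bidegree is really just this point and can be replaced by the observation that only the diagonal sums matter.
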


This is somehow opposite to an interesting example due to S.Choi, see~\cite{Ch}.

The cases $r=1,k\geq 1$ and $r=2$ in Theorem~\ref{maingolod} can be generalized by means of the following result:

\begin{theo}\label{truncgolod}
If $K_P$ is minimally non-Golod, then the same is true for its vertex truncation $Q=\vc(P)$.
\end{theo}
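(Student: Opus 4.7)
The plan is to verify the two defining conditions of minimal non-Golodness for $K_Q$: (i) $K_Q$ itself is not Golod, and (ii) $K_Q - v'$ is Golod for every vertex $v' \in V(K_Q) = V(K_P) \cup \{w\}$. Throughout I use the description $K_Q = (K_P \setminus \{F\}) \cup_{\partial F}(w \ast \partial F)$, where $w$ is the new vertex of $K_Q$ corresponding to the truncating facet and $F$ is the facet of $K_P$ dual to the cut vertex of $P$.

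For (i), the plan is to transfer a nontrivial Hochster product witness from $K_P$ to $K_Q$. Minimal non-Golodness of $K_P$ forces any such witness to come from disjoint $I, J \subseteq V(K_P)$ with $I \cup J = V(K_P)$ (otherwise the product would survive in the Golod complex $K_P - v$ for some $v \notin I \cup J$), so the product is nonzero in $\tilde H^{d-1}(K_P) \cong \tilde H^{d-1}(S^{d-1}) \cong \k$. Setting $I' = I$, $J' = J \cup \{w\}$ in $V(K_Q)$, I would use a Mayer--Vietoris argument to identify $\tilde H^*(K_{Q, J'}) \cong \tilde H^*(K_{P, J})$ (since $K_{Q, J'}$ is obtained from $K_{P, J}$ by gluing a full simplex along a simplex, a contractible modification). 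The product in $K_Q$ then lands in $\tilde H^{d-1}(K_Q) \cong \k$, and a commutative-diagram comparison with the product in $K_P$ via the natural inclusions into joins (both expressing the top class of $S^{d-1}$) shows it is nonzero.

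For (ii), I would case-split on $v'$ and apply Proposition~\ref{glue}. If $v' \in V(F)$, then $K_Q - v' = (K_P - v') \cup \sigma$ with $\sigma$ the full $(d-1)$-simplex on $(V(F) \setminus \{v'\}) \cup \{w\}$, glued along the $(d-2)$-simplex $V(F) \setminus \{v'\}$; both pieces are Golod (the first by hypothesis, the second as a simplex), so Proposition~\ref{glue} applies. If $v' \in V(K_P) \setminus V(F)$, picking $u \in V(F)$ splits $K_Q - v'$ around the closed star of $u$: the piece $(K_Q - v') - u$ equals $(K_P - v' - u) \cup \sigma_u$ glued along a simplex, Golod by Proposition~\ref{glue} (using that Golodness is preserved under vertex deletion applied to $K_P - v'$), while the star is a cone whose Golodness reduces via iterated simplex-gluings to that of its link.

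The remaining case $v' = w$, giving $K_Q - w = K_P \setminus \{F\}$, is the main obstacle since $K_P$ is not Golod. My plan is a Hochster-cohomology analysis: for disjoint $I, J \subseteq V(K_P)$ I verify triviality of the product in $\tilde H^{p+q+1}((K_P \setminus F)_{I \cup J})$ in three subcases. When $V(F) \not\subseteq I \cup J$, pick $v \in V(F) \setminus (I \cup J)$ and identify the product with its analogue in the Golod complex $K_P - v$, where it vanishes. When $V(F) \subseteq I \cup J = V(K_P)$, the product lands in $\tilde H^*(K_P \setminus F) = 0$ since $K_P \setminus F$ is a contractible disk. The tricky subcase is $V(F) \subseteq I$ (say) with $I \cup J \neq V(K_P)$, where the relevant full subcomplex $K_{P, I} \setminus F$ carries new cohomology not seen in $K_{P, I}$; here I would reduce to Golodness of $(K_P - v) \setminus F$ for $v \in V(K_P) \setminus (V(F) \cup I \cup J)$ and establish the latter by induction on $|V(K_P) \setminus V(F)|$, with a parallel argument for the vanishing of higher Massey operations obtained by applying the same case analysis to their defining systems. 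Executing this subcase induction carefully is the chief technical hurdle of the proof.
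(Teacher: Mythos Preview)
Your overall architecture matches the paper's proof: both verify that $K_Q-v'$ is Golod by a case split on $v'$, using Proposition~\ref{glue} when $v'\in V(F)$ and a Hochster analysis otherwise. Your explicit check that $K_Q$ itself is non-Golod is a welcome addition the paper leaves implicit.

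However, your treatment of the case $v'\in V(K_P)\setminus V(F)$ has a genuine gap. You propose to decompose $K_Q-v'$ as the closed star of some $u\in V(F)$ together with $(K_Q-v')-u$, and then invoke Proposition~\ref{glue}. But the intersection of these two pieces is the \emph{link} of $u$ in $K_Q-v'$, which is a $(d-2)$-dimensional sphere (or disk), not a single simplex. Proposition~\ref{glue} only applies to gluings along a common simplex, so it cannot be used here. The paper avoids this by not attempting a geometric decomposition at all in this case: instead it runs a direct Hochster-product analysis on pairs $I,J\subset V(K_Q-v')$, splitting into subcases according to whether the new vertex $w$ lies in $I$, in $J$, or in neither, and where $V(F)$ sits; each subcase reduces the product either to one in the Golod complex $K_P-v'$ or to a cohomology class in degree $d-1$ of a proper full subcomplex, which vanishes.

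Your handling of the case $v'=w$ is closer to the paper's, but your ``tricky subcase'' ($V(F)\subseteq I$, $I\cup J\ne V(K_P)$) is not resolved. The proposed induction on $|V(K_P)\setminus V(F)|$ is not well-posed: $P$ and $F$ are fixed, and you would in effect need the general statement ``Golod complex minus a top facet is Golod'', which is not available and is not what you are inducting on. The paper sidesteps this by arguing that for $K'=K_P\setminus F$ one need only test pairs with $I\cup J=V(K_P)$ (any smaller pair already lives in some Golod $K_P-u$, and when $V(F)\subseteq I$ the extra $(d-2)$-class in $K'_I$ can only multiply into degree $\ge d-1$, which vanishes in every proper full subcomplex of $K'$); for $I\cup J=V(K_P)$ the target $\tilde H^{d-1}(K_P\setminus F)$ is zero since $K_P\setminus F$ is a disk. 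You should replace your induction with this dimension-count argument.
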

\begin{proof}
Denote $Q=\vc(P)$, $KK=K_P$, $K=K_Q$, $K'=K-\,v'$, $v$ is a new vertex over a facet $F$ of $P^*$ and $d$ is the dimension of $P$.
Consider the following 3 cases:\\

{\noindent{\textit{Case 1:}}} $v'=v$\\

From Theorem~\ref{zkcoh} and minimally non-Golodness of $KK=K_P$ we get that $K'$ is Golod as $(d-1)$-dimensional cohomology group of $\mathcal Z_{K'}$ is obviously trivial (we should only check a pair $I,J$ s.t. $I\cup J=V(KK)$ and $I\cap J=\varnothing$; so either $V(F)$ is in one of the vertex subsets $I$ or $J$, say, $V(F)\subset I$, or $I,J\cap V(F)\neq\varnothing$. Here we use the ring structure described in Theorem~\ref{zkcoh}).\\

{\noindent{\textit{Case 2:}}} $v'\in V(F)$\\

In this case one can see easily that $K'=\Delta^{d-1}\cup_{\Delta^{d-2}}(KK-\,v')$, that is gluing in a simplex of two Golod complexes. Therefore, by Proposition~\ref{glue} we get that $K'$ is Golod.\\

{\noindent{\textit{Case 3:}}} $v'\notin{v\cup V(F)}$\\

Suppose a pair $I,J$ gives a nontrivial product in the cohomology ring of $\mathcal Z_{K'}$. First assume that $v\notin I$ and $v\notin J$. Then $I,J\subset V(KK)$ and either $V(F)\subset I$ or $V(F)\cap I,J\neq\varnothing$. In the latter case $K'_{I}=KK_{I}$ and $K'_J=KK'_J$ and the product in cohomology is trivial as $(d-1)$-dimensional cohomology group of any full subcomplex of $K'$ is trivial. In the former case we get a contradiction by the same argument.\\
Therefore, $v\in I$ or $v\in J$. Without loss of generality, assume $v\in I$. If $V(F)\subset I$ then $K'_I$ and $KK_I$ are topologically equivalent and $K'_J=KK_J$, so the product in cohomology is trivial. If $V(F)\cap I,J\neq\varnothing$ then obviously $K'_I$ is homotopy equivalent to $KK_I$ and $K'_J=KK_J$ and we also get a contradiction. The last possible case is $V(F)\subset J$. We have $K'_{I}=KK_I\sqcup{pt}$ and $K'_J=KK_J-\,F$ and the product in cohomology of $\mathcal Z_{K'}$ can be nontrivial only if we multiply the basis cochains corresponding to the new vertex $v\in I$ and a simplex $M$ in the boundary of $F$ (see Proposition~\ref{glue}). But this is the case of a boundary complex of $\vc^{1}(\Delta^{d})$ without one vertex. The latter complex is a Golod one as we already know, and so the product in $H^{*}(\mathcal Z_{K'})$ is trivial in the last case. 
\end{proof}

\begin{rema}
The results of Theorem~\ref{truncgolod}, Proposition~\ref{neighbour} and other obserbvations from this section are linked by the following interesting conjecture, firstly appeared as~\cite[Question 3.5]{G-P-T-W}:\\

{\noindent\textit{$\mathcal Z_K$ is topologically equivalent to a connected sum of sphere products, 2 spheres in each product, if and only if $K$ is minimally non-Golod and torsion free}.}\\

As a corollary of our results in this section we get another
argument to think of this conjecture as being true.
One can also compare Theorem~\ref{truncgolod} with an open question stated in~\cite{G-LdM} p.24: is it true that if a moment-angle manifold $\zp$ is a connected sum of sphere products then it remains in this class after a vertex  truncation of the polytope $P$.

\end{rema}



\begin{thebibliography}{99}

\bibitem{B-J} Alexander Berglund and Michael Jollenbeck. \emph{On the Golod property of Stanley--Reisner rings}, J. Algebra \textbf{315}:1 (2007), 249--273.

\bibitem{B-M}
Fr\'ed\'eric Bosio and Laurent Meersseman. \emph{Real quadrics in
$\mathbb C^n$, complex manifolds and convex polytopes.} Acta
Math.~\textbf{197} (2006), no.~1, 53--127.

\bibitem{TT}
Victor M. Buchstaber and Taras E. Panov. \emph{Toric Topology}, A book project (2013); arXiv:1210.2368.

\bibitem{B-P}
Victor M. Buchstaber and Taras E. Panov. \emph{Torus Actions in
Topology and Combinatorics} (in Russian). MCCME, Moscow, 2004, 272
pages.

\bibitem{Ch} Suyoung Choi. \emph{Different moment-angle manifolds arising from two polytopes having the same bigraded Betti numbers}, Preprint (2012); arXiv:1209.0515.


\bibitem{G-LdM} Samuel Gitler and Santiago Lopez de Medrano. \emph{Intersections of quadrics, moment-angle manifolds and connected sums}, Preprint (2009); arXiv:0901.2580.

\bibitem{G-P-T-W} Jelena Grbic, Taras Panov, Stephen Theriault and Jie Wu. \emph{Homotopy types of moment-angle complexes for flag complexes}, Preprint (2012); arXiv:1211.0873.


\bibitem{Li} Ivan Yu. Limonchenko. \emph{Bigraded Betti numbers of some simple polytopes}, Math. Notes \textbf{94}:3 (2013), 373--388. 

\bibitem{Mac}
\emph{Macaulay 2}. A software system devoted to supporting
research in algebraic geometry and commutative algebra. Available
at {\tt http://www.math.uiuc.edu/Macaulay2/}

\bibitem{McG}
D.McGavran. \emph{Adjacent connected sums and torus actions}, Trans. AMS 251 (1979), pp. 235--254.

\bibitem{P}
Taras Panov. \emph{Cohomology of face rings, and torus actions},
in ``Surveys in Contemporary Mathematics''. London Math. Soc.
Lecture Note Series, vol.~\textbf{347}, Cambridge, U.K., 2008, pp.
165--201; arXiv:math.AT/0506526.


\bibitem{S}
Richard P. Stanley. \emph{Combinatorics and Commutative Algebra},
second edition. Progr. in Math.~{\bf 41}. Birkh\"auser, Boston,
1996.


\bibitem{T-H}
Naoki Terai and Takayuki Hibi. \emph{Computation of Betti numbers
of monomial ideals associated with stacked polytopes.} Manuscripta
Math., 92(4): 447--453, 1997.

\end{thebibliography}
\end{document}